\documentclass[12pt, a4paper]{scrartcl}
\usepackage{amsmath}
\usepackage{amsthm}
\usepackage{amsfonts}
\usepackage{amssymb}
\usepackage[latin1]{inputenc}
\DeclareMathOperator{\SL}{SL}
\DeclareMathOperator{\N}{\mathbb{N}}
\DeclareMathOperator{\Z}{\mathbb{Z}}
\DeclareMathOperator{\R}{\mathbb{R}}
\DeclareMathOperator{\C}{\mathbb{C}}
\DeclareMathOperator{\Q}{\mathbb{Q}}
\renewcommand{\H}{\mathbb{H}}

\DeclareMathOperator{\cusp}{cusp}
\DeclareMathOperator{\new}{new}

	\newtheorem{Satz}{Satz}[section]
	\newtheorem{Theorem}[Satz]{Theorem}
	\newtheorem{Lemma}[Satz]{Lemma}
	\newtheorem{Proposition}[Satz]{Proposition} 
	\newtheorem{Corollary}[Satz]{Corollary}
	
	\theoremstyle{definition} 
	\newtheorem{Definition}[Satz]{Definition}

\date{}
\author{Markus Schwagenscheidt}
\title{Nonvanishing modulo $\ell$ of Fourier coefficients of Jacobi forms}

\begin{document} 


	\maketitle

	\begin{abstract}
	Let $\phi = \sum_{r^{2} \leq 4mn}c(n,r)q^{n}\zeta^{r}$ be a Jacobi form of weight $k$ (with $k > 2$ if $\phi$ is not a cusp form) and index $m$ with integral algebraic coefficients which is an eigenfunction of all Hecke operators $T_{p}, (p,m) = 1,$ and which has at least one nonvanishing coefficient $c(n,r)$ with $r$ prime to $m$. We prove that for almost all primes $\ell$ there are infinitely many fundamental discriminants $D = r^{2}-4mn < 0$ prime to $m$ with $\nu_{\ell}(c(n,r)) = 0$, where $\nu_{\ell}$ denotes a continuation of the $\ell$-adic valuation on $\Q$ to an algebraic closure. As applications we show indivisibility results for special values of Dirichlet $L$-series and for the central critical values of twisted $L$-functions of even weight newforms.
	\end{abstract}

	\section{Introduction and main results}
	
	Let $k$ and $m$ be positive integers. Ono and Skinner \cite{OnoSkinner} and Bruinier \cite{BruinierNonvanishing} have shown, using different methods, that for a non-trivial Hecke eigenform $f = \sum_{n \geq 0}a(n)q^{n} \in M_{k+1/2}(4m,\chi)$ with integral algebraic coefficients, which is not a linear combination of Shimura theta functions, almost all primes $\ell$ have the property that there are infinitely many square free $n$ with $\nu_{\ell}(a(n)) = 0$, where $\nu_{\ell}:\overline{\Q} \to \Z \cup \{\infty\}$ denotes a continuation of the $\ell$-adic valuation on $\Q$ to a fixed algebraic closure.
	
	In this work, we adapt the methods from \cite{BruinierNonvanishing} to prove an analogous result for the Fourier coefficients of Jacobi forms:

	\begin{Theorem}\label{MainTheorem}
		Let $\phi \in J_{k,m}$ be a Jacobi form of weight $k$ (with $k > 2$ if $\phi$ is not a cusp form) and index $m$ with integral algebraic coefficients $c(n,r)$ which is an eigenform of all Hecke operators $T_{p}$ for $(p,m) = 1$. Suppose that there is a discriminant $D = r^{2}-4mn < 0$ with $(D,m) = 1$ and $c(n,r) \neq 0$. For almost all primes $\ell$ with $(\ell,2m) = 1$ there exist infinitely many fundamental discriminants $D = r^{2}-4mn < 0$ prime to $m$ such that $\nu_{\ell}(c(n,r)) = 0$.
	\end{Theorem}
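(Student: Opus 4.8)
The plan is to pass, via the theta decomposition, from the Jacobi form to a vector-valued half-integral weight form and then to adapt the mod-$\ell$ argument of \cite{BruinierNonvanishing}. Writing $\phi(\tau,z)=\sum_{\mu\bmod 2m}h_\mu(\tau)\theta_{m,\mu}(\tau,z)$, the tuple $h=(h_\mu)$ is a vector-valued modular form of weight $k-\tfrac12$ for the Weil representation of the discriminant form $\Z/2m\Z$, and its coefficients are exactly the numbers $c(n,r)=:c_\mu(D)$, depending only on $\mu\equiv r\pmod{2m}$ and on $D=r^2-4mn\le 0$. Since $D\equiv r^2\pmod m$, the condition $(D,m)=1$ is the same as $(r,m)=1$. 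I fix the number field $K$ generated by all $c_\mu(D)$, a prime $\lambda\mid\ell$ of its ring of integers, and write $\tilde h$ for the reduction of $h$ modulo $\lambda$; throughout I discard the finitely many $\ell$ dividing $2m$.

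First I would convert the eigenform hypothesis into a recursion among square classes of discriminants. For $p\nmid m$ the Hecke operator $T_p$ induces on the coefficients a three-term recursion of the shape
\[
c_\mu(p^2D)=\Bigl(\lambda_p-p^{\,k-2}\left(\tfrac{D}{p}\right)\Bigr)c_\mu(D)-p^{\,2k-3}c_\mu(D/p^2),
\]
with $\lambda_p$ the eigenvalue and $c_\mu(D/p^2)=0$ unless $p^2\mid D$. Reducing modulo $\lambda$ and inducting on the prime-power divisors of $f$ (all prime to $m$) gives, for a fundamental discriminant $D_0$, the implication $c_\mu(D_0)\equiv 0 \Rightarrow c_\mu(D_0f^2)\equiv 0 \pmod\lambda$ whenever $(f,m)=1$. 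Writing the nonvanishing coefficient of the hypothesis as $c_\mu(D_0f_1^2)\ne 0$ with $D_0<0$ fundamental and $(D_0,m)=1$, and discarding the finitely many $\ell$ dividing this fixed nonzero algebraic number, the contrapositive yields $c_\mu(D_0)\not\equiv 0\pmod\lambda$. Hence, for almost all $\ell$, $\tilde h$ is nonzero and has a nonvanishing coefficient at a \emph{fundamental} discriminant $<0$ prime to $m$.

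It remains to produce infinitely many such discriminants. Assume for contradiction that only finitely many fundamental $D<0$ prime to $m$ satisfy $\nu_\ell(c_\mu(D))=0$. By the recursion above, the part of $\tilde h$ supported on prime-to-$m$ indices is then concentrated on the finitely many square classes of the exceptional $D_0$, i.e. on a set of discriminants of density zero. Here I would adapt the structural dichotomy underlying \cite{BruinierNonvanishing}: a half-integral weight (vector-valued) Hecke eigenform that is lacunary modulo $\lambda$ must be congruent modulo $\lambda$ to a linear combination of theta series, whose Hecke eigenvalues form an Eisenstein/CM system $\lambda_p\equiv\psi(p)(1+p^{\,2k-3})$. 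Matching eigensystems forces this congruence on the eigenvalues of $\phi$, and if it held for infinitely many $\ell$ it would hold identically, making $\phi$ of theta type. The weight restriction ($k>2$ in the non-cuspidal case, so that weight $k-\tfrac12>\tfrac32$ leaves no room for holomorphic unary theta series) together with the presence of a genuine coefficient $c(n,r)\ne 0$ at $D<0$ prime to $m$ excludes this, so the theta case can occur only for finitely many $\ell$, and we obtain the desired contradiction.

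The main obstacle is exactly this last dichotomy: transporting Bruinier's lacunarity-versus-theta-series result to the vector-valued Weil-representation setting and, crucially, keeping track of the coprimality-to-$m$ conditions so that the admissible theta series are pinned down and excluded. The delicate point is to bound uniformly the finite set of exceptional primes $\ell$ for which $\tilde h$ becomes congruent to an Eisenstein/theta eigensystem; controlling the non-cuspidal contribution (where the hypothesis $k>2$ enters, guaranteeing convergence and holomorphy of the relevant Eisenstein series) is the accompanying technical step, which is vacuous when $\phi$ is a cusp form.
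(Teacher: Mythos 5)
Your reduction to fundamental discriminants via the Hecke recursion is sound and corresponds to Lemmas \ref{HeckeEigenformRelation} and \ref{FundamentalDiscriminants} of the paper (you also need the latter to know that the fundamental part $D_{0}$ of a discriminant prime to $m$ is again of the form $r_{0}^{2}-4mn_{0}$ with the right residue $r_{0}$, but that is routine). The genuine gap is in the main step. You derive the infinitude of good discriminants from a claimed dichotomy --- a vector-valued half-integral weight eigenform that is lacunary modulo $\lambda$ must be congruent to a linear combination of theta series with Eisenstein/CM eigensystem --- which you attribute to \cite{BruinierNonvanishing}. No such result is in that paper, and none is proved (or even precisely stated) in your sketch; you yourself flag it as ``the main obstacle.'' Bruinier's actual mechanism, which this paper adapts, is constructive: for an auxiliary prime $p$ one compares $\phi|T_{p}$ with the form $\phi|(1-B_{p})-\varepsilon\phi_{\psi}$ supported on discriminants with $\big(\frac{D}{p}\big)=-\varepsilon$, using the Fricke involution $W_{N^{2}p^{2}}$, the commutation relations of Lemma \ref{CommutationRelations}, and the $q$-expansion principle (Theorem \ref{qExpansionPrinciple}) to control $\ell$-adic valuations at the other cusps. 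This produces a coefficient with $\nu_{\ell}(c(n,r)) \leq \nu_{\ell}(\lambda_{p}-\chi(p)\varepsilon(p^{k-1}+p^{k-2}))$ in a prescribed square class mod $p$ (Proposition \ref{OneDiscriminant}), and an induction with further auxiliary primes $q$ separates square-free parts (Proposition \ref{InfinitelyDiscriminants}). The only input about eigenvalues is then $\lambda_{p} \neq \pm(p^{k-1}+p^{k-2})$, which follows from the Deligne bound via the Skoruppa--Zagier isomorphism of $J_{k,m}^{\cusp}$ with a subspace of $S_{2k-2}(m)$ in the cuspidal case, and from the explicit Eisenstein eigenvalues $\chi(p)+\overline{\chi}(p)p^{2k-3}$ when $k>2$ in the non-cuspidal case. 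None of this machinery appears in your proposal, and nothing replaces it.

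Even granting some lacunarity-implies-theta statement, two further points would need work: (i) the exceptional set of primes $\ell$ must be shown to be finite, and your argument that a congruence of eigensystems holding for infinitely many $\ell$ must hold identically requires the character $\psi$ to be independent of $\ell$ (or at least of bounded conductor prime to $\ell$), which is not addressed; (ii) a congruence modulo $\lambda$ with a theta series does not preserve the weight (weights may differ by multiples of $\ell-1$), so the observation that there is no holomorphic unary theta series of weight $k-\tfrac12>\tfrac32$ does not by itself exclude the theta case mod $\ell$. As it stands, the proposal reduces the theorem to an unproved statement that is at least as hard as the theorem itself.
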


	The finite set of primes $\ell$ for which the theorem does not hold can be described explicitly in terms of the Hecke eigenvalues of $\phi$, see (\ref{ExceptionalPrimes1}) and (\ref{ExceptionalPrimes2}). Further, if $S = \{p_{1},\dots,p_{s}\}$ is a finite set of odd primes which are coprime to $m$ and if $\varepsilon_{1},\dots,\varepsilon_{s} \in \{\pm 1\}$, then we can find (for allmost all primes $\ell$) infinitely many fundamental discriminants $D < 0$ as in the theorem satisfying the additional local conditions $\big( \frac{D}{p_{j}}\big) = \varepsilon_{j}$ for all $p_{j} \in S$. For a more detailed version of the theorem, which also covers Jacobi forms of higher level, see Theorem \ref{MainTheoremTechnical}.

	The condition $c(n,r) \neq 0$ for some $D = r^{2}-4mn$ prime to $m$ is obviously necessary, and is violated exactly by the forms in $\sum_{d^{2}\mid m, d > 1}J_{k,m/d^{2}}|U_{d}$ where $\phi|U_{d}(\tau,z) = \phi(\tau,dz)$, compare Lemma 3.1 in \cite{SkoruppaZagier}. In particular, the theorem is applicable for newforms.
	
	
	The proof of the theorem goes along similar lines as the proof of the corresponding Theorem 4 from \cite{BruinierNonvanishing} for half-integral weight forms. The main ingredients are commutation relations of Hecke operators, Fricke involutions (with regard to the level), and twisting and projection operators for Jacobi forms on the group $\Gamma_{0}(N^{2})\ltimes (N\Z \times \Z)$, together with the $q$-expansion principle for integral weight modular forms. However, the aforementioned operators for Jacobi forms of higher level seem to be not very common, so it is interesting to see that similar arguments as for half-integral weight modular forms also work for Jacobi forms. 
	
	An important tool in the proof of Theorem \ref{MainTheorem} is the following result on the algebraicity of Fourier expansions of a Jacobi form at the cusps, which might be of independend interest:
	
	\begin{Theorem}\label{qExpansionPrinciple}
		Let $\phi \in J_{k,m}(\Gamma_{1}(N^{2}) \ltimes (N\Z \times \Z))$ have integral algebraic Fourier coefficients $c(n,r)$ and let $\ell$ be a prime with $(\ell,2mN) = 1$. For $M \in \SL_{2}(\Z)$ the function $\phi|_{k,m}M$ has algebraic Fourier coefficients and it holds $\nu_{\ell}(\phi|_{k,m}M) \geq \nu_{\ell}(\phi)$.
	\end{Theorem}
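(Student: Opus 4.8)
The plan is to reduce the assertion to the classical $q$-expansion principle for integral weight modular forms by means of the theta decomposition. First I would write
\[
\phi(\tau,z) = \sum_{\mu \bmod 2m} h_{\mu}(\tau)\,\vartheta_{m,\mu}(\tau,z), \qquad \vartheta_{m,\mu}(\tau,z) = \sum_{r \equiv \mu \,(2m)} q^{r^{2}/4m}\zeta^{r},
\]
where the components $h_{\mu}$ are forms of half-integral weight $k-\tfrac12$ whose Fourier coefficients are precisely the $c(n,r)$, reindexed by the discriminant $r^{2}-4mn$ and the residue $r \bmod 2m$. Since every nonzero coefficient of the $\vartheta_{m,\mu}$ equals $1$ and the theta functions have disjoint support in $r \bmod 2m$, one has $\nu_{\ell}(\phi) = \min_{\mu}\nu_{\ell}(h_{\mu})$, so it suffices to control the components $h_\mu$ and their behaviour under the slash operator. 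For the congruence subgroup $\Gamma_{1}(N^{2})\ltimes(N\Z\times\Z)$ this decomposition still holds, but now the $h_{\mu}$ are modular only on $\Gamma_{1}(N^{2})$, so $h_{\mu}|M$ for $M\notin\Gamma_{1}(N^{2})$ is genuinely an expansion at a new cusp rather than a recombination of the $h_{\nu}$.

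The second step is to track the slash operator through the decomposition. The theta functions are of full level and transform under $M \in \SL_{2}(\Z)$ by the finite Weil representation $\rho$ attached to the index, namely $\vartheta_{m,\mu}|_{1/2,m}M = \sum_{\nu}\rho(M)_{\mu\nu}\vartheta_{m,\nu}$. Because the weights $k-\tfrac12$ and $\tfrac12$ add up to the integer $k$, the product slash factors and one obtains
\[
\phi|_{k,m}M = \sum_{\nu}g_{\nu}\,\vartheta_{m,\nu}, \qquad g_{\nu} = \sum_{\mu}\rho(M)_{\mu\nu}\,\bigl(h_{\mu}|_{k-1/2}M\bigr).
\]
Under the hypothesis $(\ell,2mN)=1$ every entry $\rho(M)_{\mu\nu}$ lies in $\Q(\zeta_{2m},\sqrt{2m})$ and is an $\ell$-adic unit, since roots of unity and $\sqrt{2m}$ are $\ell$-integral with $\ell$-integral inverse. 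Reading off coefficients through the disjoint theta supports gives $\nu_{\ell}(\phi|_{k,m}M) = \min_{\nu}\nu_{\ell}(g_{\nu}) \geq \min_{\mu}\nu_{\ell}\bigl(h_{\mu}|_{k-1/2}M\bigr)$, so the problem is reduced to the single estimate $\nu_{\ell}(h_{\mu}|_{k-1/2}M)\geq \nu_{\ell}(\phi)$ together with the algebraicity of these expansions.

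The remaining, and most delicate, step passes from the half-integral weight components to integral weight, where the $q$-expansion principle is available. I would multiply $h_{\mu}$ by a fixed theta series $\theta_{0}$ of weight $\tfrac12$ chosen so that $h_{\mu}\theta_{0}$ is an honest modular form of integral weight $k$ on a congruence subgroup of level dividing $4mN^{2}$, hence coprime to $\ell$; the multiplier systems cancel precisely because the theta multiplier of $h_{\mu}$ is the conjugate of that of $\theta_{0}$. Applying the integral weight $q$-expansion principle to $h_{\mu}\theta_{0}$ yields $\nu_{\ell}\bigl((h_{\mu}\theta_{0})|_{k}M\bigr)\geq \nu_{\ell}(h_{\mu}\theta_{0}) = \nu_{\ell}(h_{\mu})$, the last equality because $\theta_{0}$ has a coefficient equal to $1$. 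Finally $\theta_{0}|_{1/2}M$ is again a theta series whose expansion at the relevant cusp begins with an $\ell$-adic unit, so dividing by it preserves $\ell$-integrality and does not lower the valuation, giving $\nu_{\ell}(h_{\mu}|_{k-1/2}M)\geq \nu_{\ell}(h_{\mu})\geq \nu_{\ell}(\phi)$ and, along the way, the algebraicity of all the expansions.

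I expect the main obstacle to be the bookkeeping in this last step: keeping track of automorphy factors, multiplier systems, and the precise congruence subgroup so that $h_{\mu}\theta_{0}$ is genuinely an integral weight form of level prime to $\ell$ to which the classical principle applies, and verifying that the leading coefficient of $\theta_{0}|_{1/2}M$ is an $\ell$-adic unit. The Weil representation computation and the integrality of its entries are routine once $(\ell,2m)=1$, so the crux is the clean reduction of the half-integral components to the integral weight setting.
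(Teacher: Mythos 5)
Your overall strategy is the same as the paper's: theta-decompose, push the slash operator through using the $\ell$-integrality of the entries of the Weil representation matrices, and reduce everything to a $q$-expansion principle for the half-integral weight components $h_{\mu}$. Two of your steps, however, fail as literally written. First, the decomposition $\phi=\sum_{\mu(2m)}h_{\mu}\vartheta_{m,\mu}$ is equivalent to $c(n,r)$ depending only on $r^{2}-4mn$ and on $r\bmod 2m$, which requires invariance under the full lattice $\Z^{2}$. A form invariant only under $N\Z\times\Z$ satisfies merely $c(n,r)=c(n+rN\lambda+mN^{2}\lambda^{2},r+2mN\lambda)$, so the coefficients depend on $r\bmod 2mN$ and the decomposition does not ``still hold'' with $\mu$ running mod $2m$. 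The paper repairs this in one line by first applying $U_{N}\colon\phi(\tau,z)\mapsto\phi(\tau,Nz)$, which lands in $J_{k,mN^{2}}(\Gamma_{1}(N^{2})\ltimes\Z^{2})$ without changing the set of Fourier coefficients, and only then theta-decomposes.

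Second, in passing from $h_{\mu}$ to integral weight you assert that the multiplier of $h_{\mu}$ is exactly the conjugate of that of a weight $1/2$ theta series, so that $h_{\mu}\theta_{0}$ is an honest integral weight form of level prime to $\ell$. This is not quite true: by the explicit formulas for the Weil representation, $h_{\mu}$ transforms on $\Gamma_{1}(4mN)$ with the additional character $\left(\begin{smallmatrix}a&b\\c&d\end{smallmatrix}\right)\mapsto e(-b\mu^{2}/4m)$, and correspondingly its Fourier expansion is in powers of $q^{1/4m}$; neither of these is cancelled by multiplying with $\theta_{0}$. The paper removes both obstructions at once by conjugating with $\left(\begin{smallmatrix}2m&0\\0&1/2m\end{smallmatrix}\right)$, obtaining a form for $\Gamma_{1}(16m^{3}N)$ with trivial character, integral exponents and the same $\ell$-adic valuation, after which the product of the leftover matrices is written as $M'\left(\begin{smallmatrix}\alpha&\beta\\0&\delta\end{smallmatrix}\right)$ with $\nu_{\ell}(\delta)=0$ and the $q$-expansion principle for $\Gamma_{1}$ is applied; the transition from integral to half-integral weight is precisely your multiply-by-$\theta_{0}$ argument, which the paper outsources to Lemma 1 of \cite{BruinierNonvanishing}. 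So your third step is morally the right one, but it needs this normalization before the classical principle is applicable.
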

	
	Here we wrote $\nu_{\ell}(\phi) = \inf_{n,r}(c(n,r))$ for a Jacobi form $\phi = \sum_{r^{2}\leq 4mn}c(n,r)q^{n}\zeta^{r}$. For the proof, we apply the $q$-expansion principle to the components of the vector valued modular form corresponding to $\phi$ via its theta decomposition.

%
	
	Let us now give two applications of Theorem \ref{MainTheorem}, both of which are strengthenings of applications from \cite{BruinierNonvanishing}: First, we consider the central critical values of twisted $L$-functions of even weight newforms. By the results of \cite{SkoruppaZagier}, the space of cuspidal Jacobi newforms $J_{k+1,m}^{\cusp,\new}$ of weight $k+1$ and index $m$ is isomorphic as a module over the Hecke algebra to the space $S_{2k}^{\new,-}(m)$ of elliptic newforms of weight $2k$ and level $m$ whose $L$-function $L(f,s)$ has a minus sign in its functional equation. Let $f \in S_{2k}^{\new,-}(m)$ be a normalised newform and let $\phi \in J_{k+1,m}^{\cusp,\text{new}}$ be the corresponding Jacobi newform with integral algebraic coefficients. If $D = r^{2}-4mn < 0$ with $(D,m) = 1$ is a fundamental discriminant which is a square mod $4m$, the formula of Waldspurger \cite{Waldspurger} in the explicit form given by Gross, Kohnen and   Zagier \cite{GKZ} states that
	\begin{align}\label{Waldspurger}
	\frac{|c(n,r)|^{2}}{\langle \phi,\phi\rangle} = \frac{(k-1)!}{2^{2k-1}\pi^{k}m^{k-1}}|D|^{k-1/2}\frac{L(f,D,k)}{\langle f,f\rangle},
	\end{align}
	where $L(f,D,s)$ denotes the $L$-function of $f$ twisted by the Kronecker symbol $\big(\frac{D}{\cdot}\big)$. We obtain from Theorem \ref{MainTheorem}:
	
	\begin{Corollary}\label{CorollaryWaldspurger}
		For a normalised newform $f$ in $S_{2k}^{\new,-}(m)$ there exists a nonzero complex number $\Omega_{f}$ such that for almost all primes $\ell$ with $(\ell,2m) = 1$ there are infinitely many fundamental discriminants $D < 0$ prime to $m$ which are squares mod $4m$ such that
		\[
		\nu_{\ell}\left( \frac{L(f,D,k)|D|^{k-1/2}}{\Omega_{f}}\right) = 0.
		\]
	\end{Corollary}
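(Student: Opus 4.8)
The plan is to read the corollary off Theorem \ref{MainTheorem} by applying it to the Jacobi newform attached to $f$ and then translating the resulting nonvanishing of Fourier coefficients into a statement about $L$-values via Waldspurger's formula (\ref{Waldspurger}). Let $\phi \in J_{k+1,m}^{\cusp,\new}$ be the Jacobi newform corresponding to $f$ under the Hecke-module isomorphism of \cite{SkoruppaZagier}, normalised to have integral algebraic Fourier coefficients $c(n,r)$. Then $\phi$ is a Hecke eigenform of weight $k+1$ and, being a cusp form, it escapes the restriction $k>2$ imposed in Theorem \ref{MainTheorem} for non-cuspidal inputs. Moreover, as remarked right after Theorem \ref{MainTheorem}, a newform automatically possesses a nonvanishing coefficient $c(n,r)$ with $(r^{2}-4mn,m)=1$, so every hypothesis of Theorem \ref{MainTheorem} is met.

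First I would invoke Theorem \ref{MainTheorem}: for almost all primes $\ell$ with $(\ell,2m)=1$ there are infinitely many fundamental discriminants $D=r^{2}-4mn<0$ prime to $m$ with $\nu_{\ell}(c(n,r))=0$. Next I would observe that any discriminant of this shape is automatically a square modulo $4m$, since $D=r^{2}-4mn\equiv r^{2}\pmod{4m}$; hence Waldspurger's formula (\ref{Waldspurger}) applies to exactly the discriminants produced by the theorem. I would then define the period
\[
\Omega_{f}=\frac{2^{2k-1}\pi^{k}m^{k-1}\,\langle f,f\rangle}{(k-1)!\,\langle \phi,\phi\rangle},
\]
a fixed nonzero complex number depending only on $f$. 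Rearranging (\ref{Waldspurger}) gives the identity $L(f,D,k)|D|^{k-1/2}/\Omega_{f}=|c(n,r)|^{2}$. The decisive feature here is that the a priori transcendental quantity on the left is thereby identified with the algebraic number $|c(n,r)|^{2}$, so that applying $\nu_{\ell}$ is legitimate and yields $\nu_{\ell}\!\left(L(f,D,k)|D|^{k-1/2}/\Omega_{f}\right)=\nu_{\ell}\!\left(|c(n,r)|^{2}\right)$ with no new exceptional primes introduced beyond those of Theorem \ref{MainTheorem}, precisely because $\Omega_{f}$ is a single constant.

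It remains to pass from $\nu_{\ell}(|c(n,r)|^{2})$ to $\nu_{\ell}(c(n,r))$. Since the Hecke eigenvalues of $\phi$ coincide with those of $f$, the coefficients $c(n,r)$ lie in the Hecke eigenvalue field of $f$, which is totally real; thus $c(n,r)\in\R$ and $|c(n,r)|^{2}=c(n,r)^{2}$. Consequently $\nu_{\ell}(|c(n,r)|^{2})=2\nu_{\ell}(c(n,r))=0$ for the infinitely many $D$ produced above, which is exactly the assertion.

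I expect the only genuinely delicate point to be this last step. One must ensure that the complex absolute value appearing in Waldspurger's formula does not interfere with the $\ell$-adic valuation, i.e.\ that the coefficients are real so that the conjugate in $|c(n,r)|^{2}=c(n,r)\overline{c(n,r)}$ carries the same valuation as $c(n,r)$; this is guaranteed by the totally real Hecke field. Everything else is bookkeeping: checking that the hypotheses of Theorem \ref{MainTheorem} transfer to $\phi$, and confirming that the transcendental constant is cleanly absorbed into $\Omega_{f}$ so that the valuation is taken only of an algebraic number.
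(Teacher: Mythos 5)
Your proof is correct and follows exactly the route the paper intends: apply Theorem \ref{MainTheorem} to the Jacobi newform $\phi$ attached to $f$, note that the fundamental discriminants it produces are automatically squares mod $4m$, and absorb the constants of Waldspurger's formula (\ref{Waldspurger}) into $\Omega_{f}$ so that $L(f,D,k)|D|^{k-1/2}/\Omega_{f} = |c(n,r)|^{2}$. Your additional care in arranging that the $c(n,r)$ are real (so that $\nu_{\ell}(|c(n,r)|^{2}) = 2\nu_{\ell}(c(n,r))$) is a worthwhile detail that the paper leaves implicit.
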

	
	Note that for a newform $f \in S_{2k}^{\new,+}(m)$ the functional equation of the twisted $L$-function implies $L(f,D,k) = 0$ for fundamental discriminants $D < 0$, $(D,m) = 1$, which are squares mod $4m$.  Compared to the very similar Theorem $5$ in \cite{BruinierNonvanishing} or Corollary 2 in \cite{OnoSkinner}, the discriminants $D$ in our corollary are now all coprime to $m$ and squares mod $4m$. 

	As explained in \cite{OnoSkinner}, Corollaries 3 and 4, a result like Corollary \ref{CorollaryWaldspurger} has implications for the orders of the Tate-Shafarevich groups of quadratic twists of elliptic curves over $\Q$. For related results, see also \cite{KohnenOno}.

	As a second application, we consider the coefficients of the Eisenstein series $E_{k,1} \in J_{k,1}$ for even $k \geq 4$, which are given by special values of Dirichlet $L$-series. $E_{k,1}$ is an Eigenform of all Hecke operators $T(p)$ with eigenvalue $\sigma_{2k-3}(p)$. By Theorem 2.1 in \cite{EichlerZagier}, the Fourier coefficients $e_{k,1}(n,r)$ of $E_{k,1}$ are $0$ or $1$ for $r^{2} = 4n$, and 
	\[
	e_{k,1}(n,r) = \frac{L_{D}(2-k)}{\zeta(3-2k)}, \quad \text{for } D = r^{2}-4n < 0,
	\]
	where
	\[
	L_{D}(s) = L_{D_{0}}(s)\sum_{d\mid f}\mu(d)\left( \frac{D_{0}}{d}\right)d^{-s}\sigma_{1-2s}(f/d)
	\]
	if $D = f^{2}D_{0}$ with a fundamental discriminant $D_{0} < 0$ and $f \in \N$, $L_{D_{0}}(s)$ is the Dirichlet series associated to $\chi_{D_{0}} = \big(\frac{D_{0}}{\cdot}\big)$ and $\zeta(s)$ is the Riemann $\zeta$-function. It is well known that $L_{D}(2-k) = -B_{k-1,\chi_{D}}/(k-1)$ with the generalised Bernoulli numbers $B_{k-1,\chi_{D}}$, so the values $L_{D}(2-k)$ are non-zero rational numbers whose denominators are divisible only by primes $\ell$ with $(\ell-1) \mid 2(k-1)$, see \cite{Carlitz}. Hence the coefficients of $\zeta(3-2k)E_{k,1}$ are $\ell$-integral rational numbers with bounded denominators for almost all primes $\ell$. The same is true for the coefficients of $\zeta(3-2k)E_{k,1}|V_{m}$, where $V_{m}:J_{k,1} \to J_{k,m}$ acts on Fourier expansions as
		\[
	V_{m}: \sum_{r^{2} \leq 4n}c(n,r)q^{n}\zeta^{r} \mapsto \sum_{r^{2}\leq 4mn}\bigg(\sum_{d\mid (n,r,m)}d^{k-1}c\left(\frac{nm}{d^{2}},\frac{r}{d}\right)\bigg)q^{n}\zeta^{r}.
	\]
	 Moreover, for any $\phi \in J_{k,1}$ and a fundamental discriminant $D = r^{2}-4mn< 0$ we have $c_{\phi|V_{m}}(n,r) = c_{\phi}(mn,r)$. Thus Theorem \ref{MainTheorem} applied to a suitable multiple of $\zeta(3-2k)E_{k,1}|V_{m}$ gives the following result:
	
	\begin{Corollary}
		Let $m$ be a positive integer and let $k \geq 4$ be an even integer. For almost all primes $\ell$ with $(\ell,2m) = 1$ there are infinitely many fundamental discriminants $D < 0$ prime to $m$ which are squares mod $4m$ such that $\nu_{\ell}(L_{D}(2-k)) = 0$.
	\end{Corollary}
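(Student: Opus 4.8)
**The plan is to deduce the final Corollary by applying Theorem \ref{MainTheorem} to an appropriate Hecke-eigen Jacobi form built out of $E_{k,1}|V_m$.**

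The text just before the corollary assembles all the needed pieces, so my task is to show how Theorem \ref{MainTheorem} converts into the stated nonvanishing of $L_D(2-k)$. First I would fix the Jacobi form to which the main theorem is applied: namely a suitable integer multiple of $\zeta(3-2k)E_{k,1}|V_m \in J_{k,m}$. The Eisenstein series $E_{k,1}$ is a Hecke eigenform with $T(p)$-eigenvalue $\sigma_{2k-3}(p)$, and since $V_m$ intertwines the Hecke action (being a standard index-raising operator), the image $E_{k,1}|V_m$ remains an eigenform of all $T_p$ with $(p,m)=1$. Multiplying by the rational number $\zeta(3-2k)$ (and clearing a bounded denominator) does not disturb the eigenform property, so the hypotheses of Theorem \ref{MainTheorem} on being a Hecke eigenform are met.

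Next I would verify the integrality and the nonvanishing hypotheses. By the cited results (Eichler--Zagier Theorem 2.1 together with Carlitz's theorem on denominators of generalised Bernoulli numbers), the coefficients of $\zeta(3-2k)E_{k,1}$ are $\ell$-integral rational numbers with a common bounded denominator for almost all $\ell$; the explicit action of $V_m$ on Fourier expansions (displayed in the excerpt) is a $\Z$-linear combination of such coefficients scaled by the integers $d^{k-1}$, hence $\zeta(3-2k)E_{k,1}|V_m$ also has $\ell$-integral coefficients for almost all $\ell$. After discarding the finitely many bad primes and rescaling by an integer, I obtain a Jacobi form with integral algebraic coefficients. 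For the nonvanishing hypothesis I use the key identity $c_{\phi|V_m}(n,r) = c_\phi(mn,r)$ for fundamental discriminants $D = r^2 - 4mn < 0$: choosing any fundamental discriminant $D_0$ with $(D_0, m)=1$ that is a square mod $4m$ and for which $L_{D_0}(2-k) = -B_{k-1,\chi_{D_0}}/(k-1) \neq 0$ (such exist, since the generalised Bernoulli number is nonzero) supplies a coefficient $c(n,r) = L_{D_0}(2-k)/\zeta(3-2k) \neq 0$ with $D = r^2-4mn$ prime to $m$, so that Theorem \ref{MainTheorem} applies. Note that the condition $(D,m)=1$ forces $(r,m)=1$, so $D$ is automatically prime to $m$ in the sense required, and being a square mod $4m$ is exactly the condition guaranteeing $r$ with $r^2 \equiv D \pmod{4m}$ exists.

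Finally I would translate the conclusion. Theorem \ref{MainTheorem} (in the refined form allowing the extra local square-class conditions, which here I only need insofar as $D$ is a square mod $4m$) yields, for almost all $\ell$ with $(\ell,2m)=1$, infinitely many fundamental discriminants $D < 0$ prime to $m$, squares mod $4m$, with $\nu_\ell(c(n,r)) = 0$ for the corresponding coefficient of the rescaled form. Since $c(n,r)$ is an integer multiple of $L_D(2-k)/\zeta(3-2k)$ and $\zeta(3-2k)$ is a fixed nonzero rational with $\nu_\ell = 0$ for almost all $\ell$, the equality $\nu_\ell(c(n,r)) = 0$ transfers directly to $\nu_\ell(L_D(2-k)) = 0$ after again excluding finitely many primes. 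The main point requiring care — and the step I expect to be the only genuine obstacle — is the bookkeeping of exceptional primes: one must check that the finitely many $\ell$ dividing the common denominator of $\zeta(3-2k)E_{k,1}|V_m$, dividing the rescaling integer, or dividing $\zeta(3-2k)$, together with the finite exceptional set from Theorem \ref{MainTheorem}, still leave a cofinite set of good primes, so that the ``almost all $\ell$'' conclusion survives the passage through $V_m$ and the normalisation.
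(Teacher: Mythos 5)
Your proposal is correct and follows essentially the same route the paper takes: the paper's proof consists precisely of the discussion preceding the corollary (Hecke eigenform property of $E_{k,1}$, Carlitz's bound on denominators, the action of $V_{m}$, and the identity $c_{\phi|V_{m}}(n,r) = c_{\phi}(mn,r)$ for fundamental $D$), followed by an application of Theorem \ref{MainTheorem} to a suitable multiple of $\zeta(3-2k)E_{k,1}|V_{m}$. Your additional care with the nonvanishing hypothesis and the bookkeeping of exceptional primes only makes explicit what the paper leaves implicit.
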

	
	On the other hand, for odd $k$ and negative $D$ the functional equation of $L_{D}(s)$ gives $L_{D}(2-k) = 0$.
	
	Eventually, we remark that the techniques of this paper can be used to prove analogous results for skew-holomorphic Jacobi forms as defined in \cite{SkoruppaDevelopments}: Recall that a skew-holomorphic Jacobi form of weight $k$ and index $m$ is a smooth function $\phi: \H \times \C \to \C$ which is holomorphic in $z$, which is invariant under a modified slash action $|_{k,m}^{*}$ of $\SL_{2}(\Z)\ltimes \Z^{2}$ where $(c\tau + d)^{k}$ is replaced by $(c\overline{\tau}+d)^{k-1}|c\tau+d|$ (see below for the slash operator $|_{k,m}$ on holomorphic Jacobi forms), and which has a Fourier expansions of the form
	\[
	\phi(\tau,z) = \sum_{\substack{n,r \in \Z \\ r^{2} \geq 4mn}}c(n,r)e\left( \frac{r^{2}-4mn}{2m}iv\right)q^{n}\zeta^{r}, \qquad (\tau = u+iv).
	\]
	If we replace $r^{2}\leq 4mn$ by $r^{2}\geq 4mn$ and $|_{k,m}$ by $|_{k,m}^{*}$, and add a factor $e(\frac{r^{2}-4mn}{2m}iv)$ in the Fourier expansions in all proofs in Section $2$, we see that the results in Section $2$ are also valid for skew-holomorphic forms.
	
	Since a skew-holomorphic Jacobi form also has a theta decomposition and thereby corresponds to a vector valued modular form (see \cite{BruinierBorcherdsProducts}, Section 1.1), the proof of Theorem \ref{qExpansionPrinciple} also works for skew-holomorphic forms.
	
	Further, Theorem \ref{MainTheoremTechnical} and the propositions and lemmas used for its derivation have counterparts for skew-holomorphic Jacobi forms (just replace $D \leq 0$ with $D \geq 0$ everywhere) and can be proved in the same way. But we have to be careful with the remarks after Theorem \ref{MainTheoremTechnical}: It seems (to the best knowledge of the author) that the analog of the isomorphism between $J_{k,m}^{\cusp}$ with a subspace of $S_{2k-2}$ for skew-holomorphic Jacobi forms has not been proved in the literature, so estimates for the eigenvalues of skew-holomorphic Jacobi forms are not readily available.
	
	The situation is similar with the Waldspurger formula (\ref{Waldspurger}). Although it should be true for skew-holomorphic forms by similar arguments as in the holomorphic case, we could not find a proof in the literature. Assuming the formula, we get an analog of Corollary \ref{CorollaryWaldspurger} for newforms $f \in S_{2k}^{\new,+}(m)$, now yielding infinitely many positive fundamental discriminants $D = r^{2}-4mn > 0$ with $\nu_{\ell}(L(f,D,k)|D|^{k-1/2}/\Omega_{f}) = 0$.
	


		\section{Operators on Jacobi forms of higher level}
	
	For the basic facts about Jacobi forms (of level $1$) we refer the reader to the standard reference \cite{EichlerZagier}. In this section we define Hecke operators and the Fricke involution on Jacobi forms for $\Gamma_{0}(N^{2})\ltimes (N\Z \times \Z$), as well as certain twisting and projection operators, and we study their commutation relations.
	
	The real Jacobi group $\SL_{2}(\R) \ltimes \R^{2} \cdot S^{1}$ with group law
	\[
	[M,X,\zeta]\cdot[M',X',\zeta'] = \left[MM',XM' + X', \zeta\zeta'e\left(\det\binom{XM'}{X'}\right)\right]
	\]
	acts on holomorphic functions $\phi: \H \times \C \to \C$ by
	\begin{align*}
	&\phi\bigg|_{k,m}\left[\begin{pmatrix}a & b \\ c & d \end{pmatrix},[\lambda,\mu],\zeta \right](\tau,z)  \\
	&\qquad \qquad = \zeta^{m}(c\tau + d)^{-k}e^{m}\left(-\frac{c(z + \lambda \tau + \mu)^{2}}{c\tau + d} + \lambda^{2}\tau + 2 \lambda z + \lambda \mu\right) \\
	& \qquad \qquad  \times \phi\left( \frac{a\tau + b}{c\tau + d}, \frac{z+ \lambda \tau + \mu}{c\tau + d}\right),
	\end{align*}
	where $e(x):=e^{2\pi ix}$ and $e^{m}(x) := e^{2\pi i mx}$ for $x \in \C, m \in \Z$. To simplify the notation we sometimes write $\phi|_{k,m}M$ and $\phi|_{m}X$ for $\phi|_{k,m}[M,0,1]$ and $\phi|_{k,m}[1,X,1]$.

	\begin{Definition}
		Let $\Gamma \subseteq \SL_{2}(\Z)$ be a subgroup of finite index, let $\chi$ be a character of $\Gamma$ and let $L \subseteq \R^{2}$ be a rank 2 lattice which is invariant under right multiplication by $\Gamma$. A Jacobi form of weight $k$ and index $m$ for $\Gamma \ltimes L$ with character $\chi$ is a holomorphic function $\phi: \H \times \C \to \C$ with
		\begin{enumerate}
			\item $\phi|_{k,m}M = \chi(M)\phi$ for every $M \in \Gamma$.
			\item $\phi|_{m}X = \phi$ for every $X \in L$.
			\item For every $M \in \SL_{2}(\Z)$, the function $\phi|_{k,m}M$ has a Fourier expansion of the form
			\[
			\sum_{\substack{n,r \in \Q \\ r^{2} \leq 4mn}}c_{M}(n,r)q^{n}\zeta^{r}, \qquad (q^{n} = e(n\tau), \ \zeta^{r} = e(rz)),
			\]
			with coefficients $c_{M}(n,r) \in \C$.
		\end{enumerate}
		If for all $M \in \SL_{2}(\Z)$ we have $c_{M}(n,r) = 0$ whenever $r^{2}-4mn = 0$, we call $\phi$ a Jacobi cusp form. The corresponding spaces of Jacobi forms will be denoted by $J_{k,m}(\Gamma \ltimes L,\chi)$ and $J_{k,m}^{\cusp}(\Gamma\ltimes L,\chi)$, and for $\Gamma = \SL_{2}(\Z)$ and $L = \Z^{2}$ we just write $J_{k,m}$ and $J_{k,m}^{\cusp}$.
	\end{Definition}
	
	In the following, we will mainly work with Jacobi forms for the group $\Gamma_{0}(N^{2})\ltimes (N\Z \times \Z)$ and character $\chi(M) = \chi(d)$ for $M = \left(\begin{smallmatrix}a & b \\ c & d \end{smallmatrix}\right) \in \Gamma_{0}(N^{2})$, where $\chi$ is a Dirichlet character mod $N^{2}$. Note that the transformation behaviour of $\phi \in J_{k,m}(\Gamma_{0}(N^{2})\ltimes (N\Z \times \Z),\chi)$ under $N\Z \times \Z$ implies $c(n,r) = c(n + rN\lambda + mN^{2}\lambda^{2},r+2mN\lambda)$ for all $\lambda \in \Z$.
	
	Let $p$ be a prime. Similarly as in \cite{EichlerZagier}, we define the $p$-th Hecke operator on $J_{k,m}(\Gamma_{0}(N^{2})\ltimes(N\Z \times \Z),\chi)$ by
	\[
	\phi|T_{p} = p^{k-4}\sum_{\substack{M \in \Gamma_{0}(N^{2})\setminus M_{0}(N^{2}) \\ \det(M) = p^{2} \\ \text{gcd}(M) = \square}}\chi(a)\sum_{\lambda,\mu(p)}\phi\big|_{k,m}\left[\tfrac{1}{p}M, [N\lambda,\mu],1\right],
	\]
	where $M_{0}(N^{2})$ is the set of integral $2 \times 2$ matrices whose lower left entry is divisible by $N^{2}$, and $\text{gcd}(M) = \square$ means that the greatest common divisor of the entries of $M$ is a square.

	\begin{Lemma}
		The Hecke operator $T_{p}$ is an endomorphism of $J_{k,m}(\Gamma_{0}(N^{2})\ltimes (N\Z \times \Z),\chi)$ which maps cusp forms to cusp forms. The $(n,r)$-th coefficient of $\phi|T_{p} = \sum_{n,r}c^{*}(n,r)q^{n}\zeta^{r}$ is given by
		\begin{align*}
		c^{*}(n,r) &= c(p^{2}n,pr) + \chi(p)C_{p}(n,r,m)p^{k-3}c(n,r)  \\
		& \qquad + \chi(p^{2})p^{2k-3}\sum_{\lambda(p)}c((n + rN\lambda + mN^{2}\lambda^{2})/p^{2},(r+2mN\lambda)/p)
		\end{align*}
		where
		\[
		C_{p}(n,r,m) = \begin{cases}
		p\big( \frac{D}{p}\big), & \text{if } p \nmid m, \\
		0, & \text{if } p \mid m; \quad p \nmid r, \\
		-p, & \text{if } p \mid m, r; \quad  p \nmid n, \\
		p(p-1), & \text{if } p \mid m, r,n.
		\end{cases}
		\]
		Here we use the convention $c(n,r) = 0$ if $n$ or $r$ is not integral.
	\end{Lemma}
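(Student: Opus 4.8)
The plan is to reduce the statement to an explicit computation with coset representatives, following \cite{EichlerZagier} but carefully tracking the level $N^{2}$, the lattice $N\Z\times\Z$ and the character $\chi$. Since $\det M=p^{2}$ forces $\gcd(M)\mid p$, the condition $\gcd(M)=\square$ selects exactly the primitive matrices $\gcd(M)=1$, and, reducing on the left by $\Gamma_{0}(N^{2})$ (using $(p,N)=1$), these may be taken upper triangular of the three types $\left(\begin{smallmatrix}1&b\\0&p^{2}\end{smallmatrix}\right)$, $\left(\begin{smallmatrix}p&b\\0&p\end{smallmatrix}\right)$ with $p\nmid b$, and $\left(\begin{smallmatrix}p^{2}&0\\0&1\end{smallmatrix}\right)$, the $b$ running over suitable residues. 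For each representative $\tfrac1p M\in\SL_{2}(\R)$, so the inner slash $\phi|_{k,m}[\tfrac1p M,[N\lambda,\mu],1]$ is holomorphic and its Fourier expansion can be read off directly from that of $\phi$.

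The core step is to insert $\phi=\sum c(n,r)q^{n}\zeta^{r}$ into each slash and carry out the finite sums over $b,\lambda,\mu$ modulo $p$. Since every representative is upper triangular, the slash acts by $\tau\mapsto(a\tau+b)/d$ and $z\mapsto p(z+N\lambda\tau+\mu)/d$ times the index-$m$ automorphy factor, so the exponents of $q$ and $\zeta$ and the constant phase are explicit. The sum over $\mu\pmod p$ imposes a divisibility condition ($p\mid r'$ when $d=p^{2}$, none when $d\in\{1,p\}$), and the sum over $b$ produces a character-type sum detecting divisibility of the shifted first index. For the types $(1,p^{2})$ and $(p,p)$ the periodicity $c(n,r)=c(n+rN\lambda+mN^{2}\lambda^{2},r+2mN\lambda)$ coming from the theta decomposition shows that the surviving coefficient $c(n',r')$ — of discriminant $p^{2}D$ resp.\ $D$ and with $r'$ in a fixed class mod $2mN$ — is independent of $\lambda$, equal to $c(p^{2}n,pr)$ resp.\ $c(n,r)$; collecting the factors $(p/d)^{k}$, the normalisation $p^{k-4}$, the sizes of the $b$- and $\mu$-sums and the character value $\chi(a)$ then yields the first and middle terms. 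For the type $(p^{2},1)$ no such collapse occurs and the $\lambda$-sum survives, giving $\chi(p^{2})p^{2k-3}\sum_{\lambda(p)}c((n+rN\lambda+mN^{2}\lambda^{2})/p^{2},(r+2mN\lambda)/p)$.

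I expect the middle term to be the main obstacle, since it is where the symbol $\big(\tfrac Dp\big)$ and the case distinction defining $C_{p}(n,r,m)$ appear. Here the contribution collapses to $\chi(p)p^{k-3}c(n,r)\cdot p(A_{p}-1)$, where $A_{p}$ is the number of $\lambda\pmod p$ with $mN^{2}\lambda^{2}-rN\lambda+n\equiv 0\pmod p$; the subtracted $1$ reflects the omission of $b\equiv 0$ in the $b$-sum. It then remains to evaluate $A_{p}$ by elementary counting of the roots of this quadratic congruence: for $p\nmid m$ (substituting $\nu=N\lambda$, a bijection mod $p$ since $(p,N)=1$) one finds $A_{p}=1+\big(\tfrac Dp\big)$; for $p\mid m$, $p\nmid r$ a single linear root, so $A_{p}=1$; for $p\mid m,r$, $p\nmid n$ no root, so $A_{p}=0$; and for $p\mid m,r,n$ every residue, so $A_{p}=p$. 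These reproduce the four cases of $C_{p}(n,r,m)=p(A_{p}-1)$ exactly.

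Finally, for the structural assertions I would argue as usual for double-coset operators. That $\phi|T_{p}$ again transforms under $\Gamma_{0}(N^{2})$ with character $\chi$ and is invariant under $N\Z\times\Z$ follows because right multiplication by a group element permutes the chosen representatives and the lattice vectors $[N\lambda,\mu]$ modulo $p$, the factors $\chi(a)$ absorbing the resulting change; holomorphy and the correct shape of the Fourier expansion at every cusp are inherited since $\phi|T_{p}$ is a finite sum of slashes of the Jacobi form $\phi$. The cusp-form claim is then immediate from the coefficient formula: each term on the right is a coefficient of $\phi$ at an index of discriminant $D$ or $p^{2}D$, so if $c(n,r)$ is supported on $r^{2}<4mn$ then so is $c^{*}(n,r)$.
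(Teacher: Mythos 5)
Your proposal is correct and follows essentially the same route as the paper: decompose the cosets into the three upper-triangular types $(1,p^{2})$, $(p,p)$, $(p^{2},1)$, insert the Fourier expansion, and evaluate the resulting exponential sums, with the lattice invariance $c(n,r)=c(n+rN\lambda+mN^{2}\lambda^{2},r+2mN\lambda)$ handling the $[N\lambda,\mu]$-averaging. Your identity $C_{p}(n,r,m)=p(A_{p}-1)$, with $A_{p}$ the number of roots of $mN^{2}\lambda^{2}-rN\lambda+n\equiv 0\pmod p$, correctly supplies the quadratic Gauss sum evaluation that the paper omits ``for brevity,'' and the four cases match.
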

	
	\begin{proof}
		The well-definedness and the mapping property of $T_{p}$ are easy exercises, so we only compute the Fourier expansion of $\phi|T_{p}$. As representatives for $\Gamma_{0}(N^{2}) \setminus M_{0}(N^{2})$ with determinant $p^{2}$ and square content we use the matrices
		\begin{align*}
		\alpha_{b} = \begin{pmatrix}1 & b \\ 0 & p^{2} \end{pmatrix}, \ 0 \leq b  < p^{2}, \qquad 		\beta_{h} = \begin{pmatrix}p & h \\ 0 & p \end{pmatrix}, \ 0 < h <  p, \qquad		\sigma = \begin{pmatrix}p^{2}& 0 \\0 & 1\end{pmatrix},
		\end{align*}
		(compare Theorem 1.7 in \cite{Shimura}). We have
		\[
		\left[\begin{pmatrix}1/p & b/p \\ 0 & p \end{pmatrix},[N\lambda,\mu],1\right] = \left[\begin{pmatrix}1 & 0 \\ 0 & 1 \end{pmatrix},[N\lambda p,0],1\right] \cdot \left[\begin{pmatrix}1/p & b/p \\ 0 & p \end{pmatrix},[0,\mu-N\lambda b],1\right],
		\]
		and thus the $\alpha_{b}$-part is given by
		\begin{align*}
		& p^{k-3}\sum_{b(p^{2})}\sum_{\mu(p)}\phi\bigg|_{k,m}\left[\begin{pmatrix}1/p & b/p \\ 0 & p \end{pmatrix},[0,\mu],1\right] \\
		&= p^{-3}\sum_{b(p^{2})}\sum_{\mu(p)}\sum_{\substack{n,r \in \Z \\ r^{2} \leq 4mn}}c(n,r)e(nb/p^{2})e(r\mu/p)q^{n/p^{2}}\zeta^{r/p} \\
		&= \sum_{\substack{n,r \in \Z \\ r^{2} \leq 4mn}}c(p^{2}n,pr)q^{n}\zeta^{r}.
		\end{align*}
		
		For $\beta_{h}$ we have
		\begin{align*}
		\left[ \begin{pmatrix}1 & h/p \\ 0 & 1 \end{pmatrix},[N\lambda,\mu],1\right] &= \left[ \begin{pmatrix}1 & 0 \\ 0 & 1 \end{pmatrix},[N\lambda,0],1\right]  \\
		&\quad  \cdot {} \left[ \begin{pmatrix}1 & h/p \\ 0 & 1 \end{pmatrix},[0,\mu-N\lambda h/p],e(N^{2}\lambda^{2}h/p)\right]. 
		\end{align*}
		The $\beta_{h}$-part gives $\chi(p)$ times
		\begin{align*}
		&p^{k-4}\sum_{h(p)^{*}}\sum_{\lambda,\mu (p)}\phi\bigg|_{k,m}\left[ \begin{pmatrix}1 & h/p \\ 0 & 1 \end{pmatrix},[0,\mu-N\lambda h/p],e(N^{2}\lambda^{2}h/p)\right] \\
		&= p^{k-3}\sum_{\substack{n,r \in \Z \\ r^{2}\leq 4mn}}c(n,r)\left(\sum_{h(p)^{*}}\sum_{\lambda(p)}e\left(\frac{h}{p}\left( n-rN\lambda + mN\lambda^{2} \right) \right)\right)q^{n}\zeta^{r}.
		\end{align*}
		The determination of the inner double sum is a standard computation involving quadratic Gauss sums, so we omit it for brevity.
		
		The $\sigma$-part is $\chi(p^{2})$ times
		\begin{align*}
		&p^{k-4}\sum_{\lambda,\mu (p)}\phi|_{k,m}[\sigma,[N\lambda,\mu],1] \\
		&= p^{2k-3}\sum_{\lambda(p)}\sum_{\substack{n,r \in \Z \\ r^{2}\leq 4mn}}c(n,r)q^{np^{2} + rpN\lambda + mN^{2}\lambda^{2}}\zeta^{rp + 2mN\lambda} \\
		&= p^{2k-3}\sum_{\substack{n,r \in \Z \\ r^{2}\leq 4mn}}\sum_{\lambda(p)}c((n + rN\lambda + mN^{2}\lambda^{2})/p^{2},(r+2mN\lambda)/p)q^{n}\zeta^{r}. 
		\end{align*}
		This completes the proof.
	\end{proof}
	
	Next, we define a Fricke involution on $J_{k,m}(\Gamma_{0}(N^{2})\ltimes (N\Z \times \Z),\chi)$.
	
	\begin{Lemma}
		Let $\phi \in J_{k,m}(\Gamma_{0}(N^{2}) \ltimes (N\Z \times \Z),\chi)$. Then
		\[
		\phi|W_{N^{2}}:=\phi\bigg|_{k,m} \begin{pmatrix}0 & -1/N \\ N & 0 \end{pmatrix} 
		\]
		is in $J_{k,m}(\Gamma_{0}(N^{2})\ltimes (N\Z \times \Z),\overline{\chi})$ and it holds $\phi |W_{N^{2}}|W_{N^{2}} = \chi(-1)\phi$. If $\phi$ is a cusp form, so is $\phi|W_{N^{2}}$.
	\end{Lemma}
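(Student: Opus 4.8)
The plan is to verify the three defining properties of a Jacobi form for $\phi|W_{N^2}$ together with the stated involution identity, all of which reduce to two elementary matrix computations. Writing $\omega = \left(\begin{smallmatrix} 0 & -1/N \\ N & 0\end{smallmatrix}\right)$ for the matrix defining $W_{N^2}$, I first record the two facts I will use repeatedly: $\omega^{2} = -I$, and for $M = \left(\begin{smallmatrix} a & b \\ c & d\end{smallmatrix}\right) \in \Gamma_{0}(N^{2})$ one computes $\omega M \omega^{-1} = \left(\begin{smallmatrix} d & -c/N^{2} \\ -bN^{2} & a\end{smallmatrix}\right) \in \Gamma_{0}(N^{2})$, whose lower right entry is $a$. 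Since the slash operators define a right action of the Jacobi group, i.e. $\phi|g|h = \phi|(gh)$, I can move $\omega$ past group elements by the group law, so the bulk of the proof is bookkeeping built on these two identities.

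\emph{Conditions (1) and (2).} For $M \in \Gamma_{0}(N^{2})$ I write $\phi|W_{N^{2}}|M = \phi|(\omega M) = \phi|\big((\omega M \omega^{-1})\,\omega\big) = \big(\phi|(\omega M\omega^{-1})\big)|W_{N^{2}}$. By condition (1) for $\phi$ the inner slash produces the character value $\chi(a)$; because $N^{2} \mid c$ forces $ad \equiv 1 \pmod{N^{2}}$, this equals $\overline{\chi(d)} = \overline{\chi}(M)$, yielding the character $\overline{\chi}$ as claimed. For the lattice, given $X = [\lambda N, \mu] \in N\Z \times \Z$ the group law gives $[\omega, 0, 1][1, X, 1] = [1, X\omega^{-1}, 1][\omega, 0, 1]$, and a direct computation shows $X\omega^{-1} = [-\mu N, \lambda] \in N\Z \times \Z$. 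Invariance of $\phi$ under this lattice element then forces $\phi|W_{N^{2}}|_{m}X = \phi|W_{N^{2}}$, so $\phi|W_{N^{2}}$ is invariant under $N\Z \times \Z$.

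\emph{Involution identity.} This is immediate from the first identity: $\phi|W_{N^{2}}|W_{N^{2}} = \phi|(\omega^{2}) = \phi|(-I)$, and since $-I \in \Gamma_{0}(N^{2})$, condition (1) for $\phi$ gives $\phi|(-I) = \chi(-1)\phi$.

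\emph{Condition (3) and the cusp property.} This is the only genuinely nontrivial point, since $\omega \notin \SL_{2}(\Z)$ and so condition (3) for $\phi$ cannot be invoked directly; the task is, for each $M = \left(\begin{smallmatrix} a & b \\ c & d\end{smallmatrix}\right) \in \SL_{2}(\Z)$, to exhibit a Fourier expansion of $\phi|(\omega M)$ supported on $r^{2} \leq 4mn$. My plan is to left-multiply $\omega M$ by a suitable $\gamma \in \SL_{2}(\Z)$ to make it upper triangular: taking the bottom row of $\gamma^{-1}$ to be the primitive integral vector proportional to $(N^{2}a, c)$ annihilates the lower left entry, so that $\omega M = \gamma \left(\begin{smallmatrix} \alpha & \beta \\ 0 & \alpha^{-1}\end{smallmatrix}\right)$ with $\alpha \in \Q_{>0}$ and $\beta \in \Q$. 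Then $\phi|(\omega M) = (\phi|\gamma)\big|_{k,m}\left(\begin{smallmatrix} \alpha & \beta \\ 0 & \alpha^{-1}\end{smallmatrix}\right)$, and condition (3) for $\phi$ now applies to $\phi|\gamma$ since $\gamma \in \SL_{2}(\Z)$, giving an expansion on $r^{2} \leq 4mn$. The upper-triangular slash sends the monomial $q^{n}\zeta^{r}$ to a constant times $q^{\alpha^{2}n}\zeta^{\alpha r}$, and as $(\alpha r)^{2} \leq 4m(\alpha^{2}n)$ is equivalent to $r^{2} \leq 4mn$, the support condition, and its strict version for cusp forms, is preserved. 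Hence $\phi|W_{N^{2}}$ satisfies condition (3) and is a cusp form whenever $\phi$ is. I expect this decomposition step to be the main obstacle, as it is the only place where one leaves $\SL_{2}(\Z)$; the remaining assertions follow formally from $\omega^{2} = -I$ and the conjugation identity.
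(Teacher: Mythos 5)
Your proof is correct and follows essentially the same route as the paper: the key step in both is the conjugation identity $W_{N^{2}}\cdot[M,[\lambda,\mu],1] = [\omega M\omega^{-1},[-\mu N,\lambda/N],1]\cdot W_{N^{2}}$ with $\omega M\omega^{-1} = \left(\begin{smallmatrix} d & -c/N^{2} \\ -bN^{2} & a\end{smallmatrix}\right)$, combined with $\chi(a)=\overline{\chi}(d)$ from $ad\equiv 1 \pmod{N^{2}}$. The paper dismisses the remaining properties (Fourier expansions at the cusps, the involution identity, preservation of cusp forms) as obvious, whereas you verify them explicitly via $\omega^{2}=-I$ and the decomposition $\omega M = \gamma\left(\begin{smallmatrix}\alpha & \beta \\ 0 & \alpha^{-1}\end{smallmatrix}\right)$ with $\gamma\in\SL_{2}(\Z)$; that added detail is accurate.
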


	\begin{proof}
		For $\left(\begin{smallmatrix}a & b \\ c & d \end{smallmatrix}\right)  \in \Gamma_{0}(N^{2})$ and $[\lambda,\mu] \in N\Z \times \Z$ we have
		\begin{align*}
		&W_{N^{2}}\cdot \left[ \begin{pmatrix}a & b \\ c & d \end{pmatrix},[\lambda,\mu],1\right] =\left[ \begin{pmatrix}d & -c/N^{2} \\ -bN^{2} & a \end{pmatrix}, [-\mu N,\lambda/N],1\right] \cdot W_{N^{2}}
		\end{align*}
		The transformation behaviour now follows from $\chi(a) = \overline{\chi}(d)$ as $ad \equiv 1 \mod N^{2}$. The remaining properties are obvious.
	\end{proof}
	
	The existence of this nice Fricke involution is the main reason why we work with the group $\Gamma_{0}(N^{2}) \ltimes (N\Z \times \Z)$.
	It is easy to check that for $(p,2mN) = 1$ we have the commutation relation
		\begin{align*}
		\phi|W_{N^{2}}|T_{p} &= \overline{\chi}(p^{2})\phi|T_{p}|W_{N^{2}}.
		\end{align*}

	We now introduce a twisting and a projection operator:
	
	\begin{Lemma}\label{TwistingProjectionOperators}
		Let $\phi = \sum_{r^{2}\leq 4mn}c(n,r)q^{n}\zeta^{r}\in J_{k,m}(\Gamma_{0}(N^{2})\ltimes (N\Z \times \Z),\chi)$. Further, let $p$ be a positive integer\footnote{Later, we will take $p$ to be prime, but the lemma is correct for arbitrary positive integers.}.
		\begin{enumerate}
			\item Let $\psi$ be a primitive Dirichlet character mod $p$. Then
			\[
			\phi_{\psi}(\tau,z) := \sum_{\substack{n,r \in \Z \\ r^{2}\leq 4mn} }\psi(r^{2}-4mn)c(n,r)q^{n}\zeta^{r}
			\]
			is in $J_{k,m}(\Gamma_{0}(N^{2}p^{2})\ltimes (Np\Z \times \Z),\chi\psi^{2})$.
			\item The function
			\[
			\phi|B_{p}(\tau,z) := \sum_{\substack{n,r \in \Z \\ r^{2}\leq 4mn \\ p \mid (r^{2}-4mn)} }c(n,r)q^{n}\zeta^{r}
			\]
			is in $J_{k,m}(\Gamma_{0}(N^{2}p^{2})\ltimes (Np\Z \times \Z),\chi)$.
		\end{enumerate}
		Both operators map cusp forms to cusp forms.
	\end{Lemma}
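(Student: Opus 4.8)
The plan is to realise both operators as finite $\C$-linear combinations of slash-translates of $\phi$ by elements of the real Jacobi group, and then to deduce the asserted transformation behaviour from that of $\phi$. The holomorphy of $\phi_{\psi}$ and $\phi|B_{p}$, their invariance under the lattice $Np\Z \times \Z$ (property (2) of the definition), and the cusp condition are immediate from the displayed Fourier expansions: multiplying $c(n,r)$ by $\psi(r^{2}-4mn)$ or by the indicator of $p \mid (r^{2}-4mn)$ depends only on the discriminant $D = r^{2}-4mn$, which is invariant under the index action, so the relation $c_{\psi}(n,r) = c_{\psi}(n + rNp\lambda + m(Np)^{2}\lambda^{2}, r+2mNp\lambda)$ follows from the corresponding identity for $\phi$ (using $Np\Z \subseteq N\Z$), and the support $r^{2} \leq 4mn$ (resp. $r^{2} < 4mn$ for cusp forms) is preserved. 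Thus the entire content lies in property (1), invariance under $\Gamma_{0}((Np)^{2})$ up to the character $\chi\psi^{2}$ (resp. $\chi$), together with the behaviour under all of $\SL_{2}(\Z)$ needed for property (3).

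First I would produce the translate expansion. For a primitive character $\psi$ mod $p$, separability of Gauss sums gives $\psi(D) = g(\overline{\psi})^{-1}\sum_{u(p)}\overline{\psi}(u)e(Du/p)$, while a quadratic Gauss sum (completing the square, here for $p$ prime) converts the quadratic phase $e(ur^{2}/p)$ into a linear one in $r$, namely $\frac{1}{(\frac{-u}{p})g_{p}}\sum_{v(p)}e(-uv^{2}/p)e(-2urv/p) = e(ur^{2}/p)$ with $g_{p} = \sum_{w(p)}e(w^{2}/p)$. Since a $\tau$-translation by $-4mu/p$ multiplies the $(n,r)$-coefficient by $e(-4mnu/p)$ and a $z$-translation by $-2uv/p$ multiplies it by $e(-2urv/p)$, one checks that
\[
\phi_{\psi} = \frac{1}{g(\overline{\psi})\,g_{p}}\sum_{u(p)^{*}}\frac{\overline{\psi}(u)}{\left(\frac{-u}{p}\right)}\sum_{v(p)}e\!\left(-\frac{uv^{2}}{p}\right)\phi\Big|_{k,m}\left[\begin{pmatrix}1 & -\tfrac{4mu}{p} \\ 0 & 1\end{pmatrix},\big[0,-\tfrac{2uv}{p}\big],1\right],
\]
the terms $u \equiv 0$ dropping out by primitivity. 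For $B_{p}$ one argues identically but with trivial weights: $\phi|B_{p} = \tfrac{1}{p}\sum_{u(p)}\sum_{n,r}c(n,r)e((r^{2}-4mn)u/p)q^{n}\zeta^{r}$, where each inner sum with $(u,p)=1$ is the same $v$-average as above and the term $u \equiv 0$ contributes $\phi$ itself, so neither primitivity nor any factor $\psi$ enters. For general (non-prime) $p$ the same conclusion is reached through the theta decomposition $\phi = \sum_{\mu}h_{\mu}\theta_{m,\mu}$: twisting the vector valued form $(h_{\mu})$ by $\psi$ in its exponent variable $-D/(4m)$ is the standard twist of a modular form and reassembles with the $\theta_{m,\mu}$ to give $\phi_{\psi}$.

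With $\phi_{\psi}$ and $\phi|B_{p}$ written as such combinations, property (1) is verified by commuting $\gamma = \left(\begin{smallmatrix}a & b \\ c & d\end{smallmatrix}\right) \in \Gamma_{0}((Np)^{2})$ past each translate inside the Jacobi group. The key point is that conjugation sends the $\tau$-translation parameter $u$ to $a^{2}u \bmod p$, up to an element of $\Gamma_{0}(N^{2})\ltimes(N\Z\times\Z)$ on which $\phi$ transforms with $\chi(\gamma)$; the congruence $(Np)^{2}\mid c$ guarantees that the $\tfrac{1}{p}$-denominators are absorbed and no M\"obius part survives. Re-summing the Gauss-sum weights then reproduces $\phi_{\psi}$ times $\chi(d)\psi(d)^{2} = \chi\psi^{2}(\gamma)$, the two powers of $\psi$ appearing because the twisting parameter couples to the binary quadratic discriminant $D$, which scales quadratically under the substitution; for $B_{p}$ the uniform weights are merely permuted by $u \mapsto a^{2}u$, leaving only $\chi(\gamma)$. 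The same commutation, carried out for an arbitrary $M \in \SL_{2}(\Z)$, shows that $\phi_{\psi}|_{k,m}M$ and $\phi|B_{p}|_{k,m}M$ are combinations of Jacobi-type Fourier expansions, which gives property (3); cusp forms map to cusp forms since the construction never enlarges the support in $D$.

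The main obstacle is precisely this last bookkeeping: tracking how conjugation by $\gamma$ permutes $u$ through the square class $a^{2} \bmod p$ and how the normalisations $g(\overline{\psi})$, $g_{p}$ and $\left(\frac{-u}{p}\right)$ recombine, so that exactly two powers of $\psi$ (and the correct level $(Np)^{2}$ and lattice $Np\Z\times\Z$) emerge. I expect the cleanest organisation is to check invariance only on generators of $\Gamma_{0}((Np)^{2})$ — the upper-triangular $T$, for which it is immediate from $n \in \Z$, and the lower-triangular generator, reduced to the upper-triangular case by means of the Fricke involution $W_{(Np)^{2}}$ — rather than on a general $\gamma$, deferring the routine quadratic-Gauss-sum evaluations to the theta-decomposition picture, where they are classical.
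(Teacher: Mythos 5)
Your proposal follows essentially the same route as the paper: both express $\phi_{\psi}$ and $\phi|B_{p}$ as Gauss-sum-weighted linear combinations of translates $\phi|_{k,m}\left[\left(\begin{smallmatrix}1 & -4m\alpha/p \\ 0 & 1\end{smallmatrix}\right),[0,-\beta/p],1\right]$ and then commute a general $\gamma \in \Gamma_{0}(N^{2}p^{2})$ past these translates, the reindexing of the translation parameters by a square unit mod $p$ producing the factor $\psi(d)^{2}$ (the paper keeps the quadratic Gauss sum $G(\alpha,\beta,p)=\sum_{\rho(p)}e((\rho^{2}\alpha+\rho\beta)/p)$ unevaluated, which handles composite $p$ uniformly without passing to the theta decomposition, whereas your explicit evaluation needs $p$ an odd prime). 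One caution about your closing remark: $\Gamma_{0}(N^{2}p^{2})$ is in general not generated by $T$ together with a single lower-triangular matrix, so the proposed ``check on generators'' shortcut is not available, and the direct commutation for arbitrary $\gamma$ --- which you also describe and which the paper carries out --- is the argument to rely on.
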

	
	\begin{proof}
		First note that the Fourier coefficients of $\phi_{\psi}$ and $\phi|B_{p}$ are invariant under $n \mapsto n + rN\lambda + mN^{2}\lambda^{2}, r\mapsto r+2mN\lambda$ for $\lambda \in \Z$, which is equivalent to the invariance of $\phi$ under $[\lambda,\mu] \in N\Z \times \Z$ (which is in fact stronger than invariance under $Np\Z \times \Z$, but we will not need this).

		We check the transformation rule of $\phi_{\psi}$ under $\Gamma_{0}(N^{2}p^{2})$: Since $\psi$ is primitive, we can write
		\[
		\psi(n) = \frac{1}{G(\overline{\psi})}\sum_{\alpha(p)^{*}}\overline{\psi}(\alpha)e(n\alpha/p)
		\]
		with the Gauss sum $G(\overline{\psi}) = \sum_{\alpha(p)^{*}}\overline{\psi}(\alpha)e(\alpha/p)$. We compute
		\begin{align*}
		\phi_{\psi}(\tau,z)&= \frac{1}{G(\overline{\psi})}\sum_{\alpha(p)^{*}}\overline{\psi}(\alpha)\sum_{\substack{n,r \in \Z \\ r^{2} \leq 4mn}}c(n,r)e((r^{2}-4mn)\alpha/p)q^{n}\zeta^{r} \\
		&= \frac{1}{G(\overline{\psi})}\sum_{\alpha(p)^{*}}\overline{\psi}(\alpha)\sum_{\rho (p)}\sum_{\substack{n,r \in \Z \\ r^{2} \leq 4mn \\ r \equiv \rho (p)}}c(n,r)e( (r^{2}-4mn)\alpha/p)q^{n}\zeta^{r} \\
		&= \frac{1}{G(\overline{\psi})}\sum_{\alpha(p)^{*}}\overline{\psi}(\alpha)\sum_{\rho (p)}e(\rho^{2}\alpha/p) \\
		& \qquad \times\sum_{\substack{n,r \in \Z \\ r^{2} \leq 4mn }}c(n,r)\left(\frac{1}{p}\sum_{\beta(p)}e((\rho - r)\beta/p)\right)e(-4mn\alpha/p))q^{n}\zeta^{r} \\
		&= \frac{1}{pG(\overline{\psi})}\sum_{\alpha(p)^{*}}\overline{\psi}(\alpha)\sum_{\beta(p)}G(\alpha,\beta,p)\phi(\tau,z)\bigg|_{m,k}\left[ \begin{pmatrix}1 & -4m\alpha/p \\ 0 & 1\end{pmatrix}, [0,-\beta/p],1\right],
		\end{align*}
		where $G(\alpha,\beta,p) = \sum_{\rho (p)}e((\rho^{2}\alpha+\rho \beta)/p)$ is a quadratic Gauss sum. For $M=\left( \begin{smallmatrix} a & b \\ c & d\end{smallmatrix}\right) \in \Gamma_{0}(N^{2}p^{2})$ we have
		\begin{align*}
		&\left[\begin{pmatrix}1 & -4m\alpha/p \\0 & 1 \end{pmatrix}, [0,-\beta/p],1 \right]\cdot \left[ M,[0,0],1\right]  \\
		&= \left[M',[\lambda',\mu'],1 \right] \cdot \left[ \begin{pmatrix}1 & -4md^{2} \alpha/p \\ 0 & 1 \end{pmatrix}, [0,-d\beta/p],1\right]
		\end{align*}
		where
		\[
		M' =\begin{pmatrix} a  -4m c\alpha /p & b +4mad^{2}\alpha /p -4md\alpha/p- 16m^{2}cd^{2}\alpha^{2}/p^{2} \\ c & d + 4m c\alpha d^{2} /p \end{pmatrix} \in \Gamma_{0}(N^{2}p^{2})
		\]
		and
		\[
		 [\lambda',\mu'] =  [-c\beta/p , 0]\begin{pmatrix}1 & 4md^{2} \alpha/p \\ 0 &  1\end{pmatrix} \in Np\Z \times \Z.
		\]
		Note that $\chi(M') = \chi(M)$. Hence we find
		\begin{align*}
		\phi_{\psi}|_{k,m}M &=\frac{1}{pG(\overline{\psi})}\chi(M)\sum_{\alpha(p)^{*}}\overline{\psi}(\alpha)\sum_{\beta(p)}G(\alpha,\beta,p)\phi(\tau - 4md^{2}\alpha/p,z - d\beta/p) \\
		&= \frac{1}{pG(\overline{\psi})}\chi(M)\sum_{\alpha(p)^{*}}\overline{\psi}\big(\overline{d}^{2}\alpha\big)\sum_{\beta(p)}G\big(\overline{d}^{2}\alpha,\overline{d}\beta,p\big)\phi(\tau -4m\alpha/p,z - \beta/p) \\
		&= \chi(M)\psi(d)^{2}\frac{1}{pG(\overline{\psi})}\sum_{\alpha(p)^{*}}\overline{\psi}(\alpha)\sum_{\beta(p)}G(\alpha,\beta,p)\phi(\tau -4m\alpha/p,z - \beta/p) \\
		&= \chi(M)\psi(M)^{2}\phi_{\psi}(\tau,z).
		\end{align*}
		This shows that $\phi_{\psi}$ transforms correctly. A standard argument shows that $\phi_{\psi}$ has suitable Fourier expansions at the cusps.
		
		For $\phi|B_{p}$ a similiar calculation as above gives
		\[
		\phi|B_{p}(\tau,z) = \frac{1}{p^{2}}\sum_{\alpha(p)}\sum_{\beta(p)}G(\alpha,\beta,p)\phi(\tau,z)\bigg|_{m,k}\left[ \begin{pmatrix}1 & -4m\alpha/p \\ 0 & 1\end{pmatrix}, [0,-\beta/p],1\right].
		\]
		Now we proceed in the same way as before to get the transformation law for $\phi|B_{p}$ under $\Gamma_{0}(N^{2}p^{2})$.
	\end{proof}
	
	For $p \mid m$ the proof actually shows that $\phi|B_{p}$ transforms as a Jacobi form for $\Gamma_{0}(N^{2}p)\ltimes (N\Z \times \Z)$. One could treat twists of Jacobi forms by $\chi(n)$ or $\chi(r)$ in a similar way. The latter, for example, has been considered in $\cite{MartinOsses}$.
	
	Let $\phi \in J_{k,m}(\Gamma_{0}(N^{2})\ltimes (N\Z \times \Z),\chi)$ and let $\psi$ be a quadratic character mod $p$. For each prime $q$ with $(q,p) = 1$ it holds
		\begin{align*}
		(\phi|T_{q})_{\psi} &= \phi_{\psi}|T_{q}, \\
		\phi|T_{q}|B_{p} &= \phi|B_{p}|T_{q}.
		\end{align*}
		This can be checked by comparing the actions of the operators on Fourier expansions. The interplay between the Fricke involution and the twisting and projection operators is described by the following lemma:

	\begin{Lemma}\label{CommutationRelations}
		Let $p$ be a prime with $(p,2mN) = 1$. For $\phi \in J_{k,m}(\Gamma_{0}(N^{2})\ltimes (N\Z \times \Z),\chi)$ and $\psi = \big( \frac{\cdot}{p}\big)$ we have
		\begin{align*}
		(\phi|W_{N^{2}}^{-1})_{\psi}|W_{N^{2}p^{2}} =  \overline{\chi}(p)p\sum_{\substack{n,r \in \Z \\ r^{2} \leq 4mn \\ p \mid n,r}}c(n,r)q^{n}\zeta^{r} -  \overline{\chi}(p)\sum_{\substack{n,r \in \Z \\ r^{2} \leq 4mn \\ p \mid r}}c(n,r)q^{n}\zeta^{r}
		\end{align*}
		and
		\begin{align*}
		\phi|W_{N^{2}}^{-1}|B_{p}|W_{N^{2}p^{2}} = p^{k-1}\phi(p^{2}\tau,pz) + \overline{\chi}(p)\sum_{\substack{n,r \in \Z \\ r^{2} \leq 4mn \\ p \mid r}}\left( \frac{r^{2}-4mn}{p}\right)c(n,r)q^{n}\zeta^{r}.
		\end{align*}
	\end{Lemma}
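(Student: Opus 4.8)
The plan is to reduce both identities to the Gauss-sum representations of the twisting and projection operators obtained in the proof of Lemma~\ref{TwistingProjectionOperators}, now applied with $\phi$ replaced by $\phi|W_{N^2}^{-1}$, and then to commute the outer Fricke involution $W_{N^2p^2}$ past the resulting finite sum. Writing $\gamma_\alpha = \left(\begin{smallmatrix}1 & -4m\alpha/p \\ 0 & 1\end{smallmatrix}\right)$ and $X_\beta = [0,-\beta/p]$, those representations express $(\,\cdot\,)_\psi$ as $\tfrac{1}{pG(\overline{\psi})}\sum_{\alpha}\overline{\psi}(\alpha)\sum_{\beta}G(\alpha,\beta,p)\,(\,\cdot\,)|_{k,m}[\gamma_\alpha,X_\beta,1]$ (sum over $\alpha$ coprime to $p$) and $(\,\cdot\,)|B_p$ as $\tfrac{1}{p^2}\sum_{\alpha}\sum_{\beta}G(\alpha,\beta,p)\,(\,\cdot\,)|_{k,m}[\gamma_\alpha,X_\beta,1]$. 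Substituting $\phi|W_{N^2}^{-1}$ and slashing by $W_{N^2p^2}$ on the right, the right-action identity $(\Phi|A)|B = \Phi|(AB)$ turns every summand into $\phi|_{k,m}\big(W_{N^2}^{-1}\,[\gamma_\alpha,X_\beta,1]\,W_{N^2p^2}\big)$, the product being taken in the Jacobi group.

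The first genuine step is this conjugation. Using the group law together with $W_{N^2}^{-1} = \left(\begin{smallmatrix}0 & 1/N \\ -N & 0\end{smallmatrix}\right)$ and $W_{N^2p^2} = \left(\begin{smallmatrix}0 & -1/(Np) \\ Np & 0\end{smallmatrix}\right)$, a direct matrix computation gives
\[
W_{N^2}^{-1}\,[\gamma_\alpha,X_\beta,1]\,W_{N^2p^2} = \left[\begin{pmatrix} p & 0 \\ 4mN^2\alpha & 1/p\end{pmatrix}, [-\beta N,0],1\right].
\]
Thus conjugation trades the upper-triangular $\tau$-translation $\gamma_\alpha$ for the lower-triangular $g_\alpha := \left(\begin{smallmatrix} p & 0 \\ 4mN^2\alpha & 1/p\end{smallmatrix}\right)$ and carries the $z$-translation $-\beta/p$ into the $\lambda$-slot as $-\beta N\in N\Z$. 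For the projection operator the term $\alpha\equiv 0$ survives: there $g_0=\left(\begin{smallmatrix}p&0\\0&1/p\end{smallmatrix}\right)$, the inner sum $G(0,\beta,p)$ forces $\beta\equiv 0$, and the slash formula gives $\phi|_{k,m}[g_0,0,1]=p^{k}\phi(p^2\tau,pz)$; combined with the prefactor $1/p^2$ and $G(0,0,p)=p$ this yields exactly the leading term $p^{k-1}\phi(p^2\tau,pz)$. In the twist $\alpha$ runs only over the units mod $p$, so no such contribution appears, consistent with the absence of a $\phi(p^2\tau,pz)$ term in the first formula.

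The main obstacle is the evaluation of the terms with $\alpha\not\equiv 0$. Here $g_\alpha$ has nonzero lower-left entry, and the factorisation $g_\alpha = \left(\begin{smallmatrix}p&0\\0&1/p\end{smallmatrix}\right)\left(\begin{smallmatrix}1&0\\4mN^2\alpha p&1\end{smallmatrix}\right)$ places the integral element $\left(\begin{smallmatrix}1&0\\4mN^2\alpha p&1\end{smallmatrix}\right)\in\Gamma_0(N^2)$ only on the \emph{right} of the scaling, so the $\Gamma_0(N^2)$-invariance of $\phi$ does not reduce $\phi|_{k,m}g_\alpha$ directly; one must instead expand it through the slash formula and keep careful track of the $e^m$-factors coming from the index-$m$ action. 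After splitting off the Heisenberg translation $[-\beta N,0]$ and performing first the $\beta$-summation and then the $\alpha$-summation, the quadratic Gauss sums $G(\alpha,\beta,p)$ (with the extra weight $\overline{\psi}(\alpha)/G(\overline{\psi})$ in the twist case) collapse onto the diagonal; using $\psi=\big(\tfrac{\cdot}{p}\big)$ and the standard value of $G(\overline{\psi})$, this reproduces the discriminant conditions $p\mid r$ and $p\mid n,r$ together with the Legendre weights $\big(\tfrac{r^2-4mn}{p}\big)$, while the scalar $\overline{\chi}(p)$ (resp.\ $\overline{\chi}(p)p$) is produced by the character of $W_{N^2}^{-1}$ and of the scaling matrix. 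The delicate points are precisely the lower-triangular slash for $\alpha\not\equiv 0$ and the Gauss-sum bookkeeping that separates the two right-hand sides; the remaining manipulations are routine.
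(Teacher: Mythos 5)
Your setup coincides with the paper's: substitute the Gauss--sum representations of $(\cdot)_{\psi}$ and $(\cdot)|B_{p}$ from the proof of Lemma \ref{TwistingProjectionOperators}, slash by the Fricke elements, and conjugate. Your computation of the conjugated element $\left[\left(\begin{smallmatrix}p&0\\ 4mN^{2}\alpha&1/p\end{smallmatrix}\right),[-\beta N,0],1\right]$ and of the $\alpha\equiv 0$ contribution $p^{k-1}\phi(p^{2}\tau,pz)$ to the $B_{p}$-identity are both correct. The genuine gap sits exactly where you place "the main obstacle": for $\alpha\not\equiv 0\ (p)$ you correctly observe that the factorisation $g_{\alpha}=\left(\begin{smallmatrix}p&0\\0&1/p\end{smallmatrix}\right)\left(\begin{smallmatrix}1&0\\ 4mN^{2}\alpha p&1\end{smallmatrix}\right)$ does not let you invoke the $\Gamma_{0}(N^{2})$-invariance of $\phi$, and you then propose to "expand $\phi|_{k,m}g_{\alpha}$ through the slash formula" and track the $e^{m}$-factors. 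That cannot work: $g_{\alpha}$ maps $i\infty$ to the cusp $p/(4mN^{2}\alpha)$, so $\phi|_{k,m}[g_{\alpha},\cdot]$ is an expansion of $\phi$ at a cusp other than $\infty$, and no Fourier-side bookkeeping expresses it in terms of the coefficients $c(n,r)$ without using modularity. Consequently the claimed "collapse onto the diagonal" of the Gauss sums, the conditions $p\mid r$ and $p\mid n,r$, and the Legendre weights are asserted rather than derived; likewise the factor $\overline{\chi}(p)$ does not come from a "character of $W_{N^{2}}^{-1}$" (which has none) or of the scaling matrix (which is not in $\Gamma_{0}(N^{2})$).

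The missing idea is the correct decomposition of the conjugated Jacobi group element. Choosing an integer $\overline{N^{2}\alpha}$ with $N^{2}\alpha\cdot\overline{N^{2}\alpha}\equiv 1\ (p)$, one writes (in the paper's normalisation, where $\alpha$ stands for $-4m\alpha$ and $\beta$ for $-\beta$)
\[
\begin{pmatrix}p&0\\-N^{2}\alpha&1/p\end{pmatrix}=\begin{pmatrix}p&\overline{N^{2}\alpha}\\-N^{2}\alpha&(1-\overline{N^{2}\alpha}N^{2}\alpha)/p\end{pmatrix}\begin{pmatrix}1&-\overline{N^{2}\alpha}/p\\0&1\end{pmatrix},
\]
with the first factor in $\Gamma_{0}(N^{2})$, and likewise splits the Heisenberg part $[N\beta,0]$ off into $N\Z\times\Z$ at the cost of a residual translation and a cocycle $e(-\overline{\alpha}\beta^{2}/p)$. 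The invariance of $\phi$ under $\Gamma_{0}(N^{2})\ltimes(N\Z\times\Z)$ then produces $\chi$ of the lower-right entry, which is $\overline{\chi}(p)$ since the upper-left entry is $p$; what remains are upper-triangular rational translations acting termwise on the Fourier expansion; and the cocycle, raised to the $m$-th power by the slash action, completes the square in $G(\alpha,\beta,p)$ so that the $\rho$-sum evaluates to $\psi(-m\alpha)G(\psi)$ and cancels the normalising factors $\overline{\psi}(\alpha)/G(\overline{\psi})$. Without this step the evaluation of the $\alpha\not\equiv 0$ terms, and hence the proof, does not go through.
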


	\begin{proof}
		Using the representation of $\phi_{\psi}$ obtained in the proof of Lemma \ref{TwistingProjectionOperators} we have
		\begin{align*}
		(\phi|W_{N^{2}}^{-1})_{\psi}|W_{N^{2}p^{2}} &= \frac{1}{pG(\overline{\psi})}\psi(-m)\sum_{\alpha(p)^{*}}\overline{\psi}(\alpha)\sum_{\beta(p)}\left(\sum_{\rho(p)}e(-m(\rho^{2}\alpha + 2\rho \beta)/p) \right) \\
		& \qquad \times {} \phi\bigg|_{k,m}W_{N^{2}}^{-1}\left[\begin{pmatrix}1 & \alpha/p \\ 0 & 1 \end{pmatrix},[0,\beta/p],1 \right]  W_{N^{2}p^{2}},
		\end{align*}
		where we replaced $-4m\alpha$ by $\alpha$, $-\beta$ by $\beta$ and $\rho$ by $2m\rho$ mod $p$.
		Let $\overline{N^{2}\alpha} \in \Z$ denote an inverse of $N^{2}\alpha$ mod $p$. We have
		\begin{align*}
		W_{N^{2}}^{-1}\left[\begin{pmatrix}1 & \alpha/p \\ 0 & 1 \end{pmatrix},[0,\beta/p],1 \right] W_{N^{2}p^{2}} &=\left[\begin{pmatrix}p & \overline{N^{2}\alpha} \\ -N^{2}\alpha & (1-\overline{N^{2}\alpha}N^{2} \alpha)/p \end{pmatrix}, [N\beta,0],1\right] \\
		& \quad\times {} \left[\begin{pmatrix}1 & -\overline{N^{2}\alpha}/p \\ 0 & 1 \end{pmatrix}, [0,\overline{N^{2}\alpha}N\beta/p],e(-\overline{\alpha}\beta^{2}/p)\right].
		\end{align*}
		We obtain
		\begin{align*}
		(\phi|W_{N^{2}}^{-1})_{\psi}|W_{N^{2}p^{2}} &=\frac{\overline{\chi}(p)}{pG(\overline{\psi})}\psi(-m)\sum_{\alpha(p)^{*}}\overline{\psi}(\alpha)\sum_{\beta(p)}\left(\sum_{\rho (p)}e(-m(\rho^{2}\alpha+2\rho \beta + \overline{\alpha}\beta^{2})/p) \right) \\
		& \quad \times {} \phi\bigg|_{k,m}\left[\begin{pmatrix}1 & -\overline{N^{2}\alpha}/p \\ 0 & 1 \end{pmatrix},[0,\overline{N\alpha}\beta/p],1 \right]. \\
		\end{align*}
		The sum over $\rho$ is just $\psi(-m\alpha)G(\psi)$.
		Since $\psi = \overline{\psi}$, we find
		\[
		(\phi|W_{N^{2}}^{-1})_{\psi}|W_{N^{2}p^{2}} = \frac{\overline{\chi}(p)}{p}\sum_{\alpha(p)^{*}}\sum_{\beta(p)}\phi\bigg|_{k,m}\left[\begin{pmatrix}1 & \alpha/p \\ 0 & 1 \end{pmatrix},[0,\beta/p],1 \right].
		\]
		Plugging in the Fourier expansion of $\phi$ and an easy calculation give the stated formula.
		
		For the projection operator we use the representation for $\phi|B_{p}$ given at the end of the proof of Lemma \ref{TwistingProjectionOperators}. The summand for $\alpha = 0$ gives $p^{k-1}\phi(p^{2}\tau,pz)$ and the remaining part can be treated as above.
	\end{proof}

	\section{Proofs of the main results}
	
	We start with the proof of Theorem \ref{qExpansionPrinciple} as it will be needed in the proof of Theorem \ref{MainTheorem}.
	
%
	\begin{proof}[Proof of Theorem \ref{qExpansionPrinciple}]
		First note that $\phi|U_{N}(\tau,z) = \phi(\tau,Nz)$ is in $J_{k,mN^{2}}(\Gamma_{1}(N^{2}) \ltimes \Z^{2})$ and $\phi|U_{N}|_{k,mN^{2}}M = \phi|_{k,m}M|U_{N}$. Since $\phi|_{k,m}M$ and $\phi|_{k,m}M|U_{N}$ have the same set of Fourier coefficients, it suffices to show the claim for $\phi \in J_{k,mN^{2}}(\Gamma_{1}(N^{2})\ltimes \Z^{2})$. To simplify the notation, we take $\phi \in J_{k,m}(\Gamma_{1}(N)\ltimes \Z^{2})$.
		
		Such a form $\phi$ has a theta decomposition
		\[
		\phi(\tau,z) = \sum_{\mu(2m)}h_{\mu}(\tau)\theta_{m,\mu}(\tau,z)
		\]
		with
		\[
		h_{\mu}(\tau) = \sum_{D < 0}c_{\mu}(D)q^{-D/4m}, \qquad \theta_{m,\mu}(\tau,z) = \sum_{\substack{r \in \Z \\ r \equiv \mu (2m)}}q^{r^{2}/4m}\zeta^{r},
		\]
		where $c_{\mu}(D) = c((\mu^{2}-D)/4m,\mu)$, see  \cite{EichlerZagier}, \S 5. The theta function $\theta_{m,\mu}$ satisfies the transformation rules
		\begin{align*}
		\theta_{m,\mu}(\tau+1,z) &= e(\mu^{2}/4m)\theta_{m,\mu}, \\
		\theta_{m,\mu}\left(-\frac{1}{\tau},\frac{z}{\tau} \right) & = \sqrt{\tau/2mi}\ e(mz^{2}/\tau)\sum_{\nu(2m)}e(-\mu\nu/2m)\theta_{m,\nu}.
		\end{align*}
		This together with the transformation behaviour of $\phi$ under $\Gamma_{1}(N)$ implies that the tuple $(h_{\mu})_{\mu(2m)}$ transforms under $\Gamma_{1}(N)$ as a vector valued modular form of weight $k-1/2$ for the Weil representation $\rho_{L}$ of the lattice $L = \Z$ with quadratic form $q(x) = -mx^{2}$, compare \cite{BruinierBorcherdsProducts}, Section 1.1. For $M \in \SL_{2}(\Z)$ we have
		\[
		(\phi|_{k,m}M)(\tau,z) = \sum_{\mu(2m)}(h_{\mu}|_{k-1/2}M)(\tau) \cdot (\theta_{m,\mu}|_{1/2,m}M)(\tau,z).
		\]
		From the transformation rules of $\theta_{m,\mu}$ stated above it follows that $\theta_{m,\mu}|_{1/2,m}M$ is a linear combination $\theta_{m,\mu} = \sum_{\nu(2m)}\rho_{\nu,\mu}\theta_{m,\nu}$, where the coefficients $\rho_{\nu,\mu}$ are algebraic numbers with non-negative $\ell$-adic valuations. Here the assumption $(\ell,2mN) = 1$ was used. Thus to prove the claim of the theorem it remains to show that the functions $h_{\mu}|_{k-1/2}M$ have algebraic coefficients with $\nu_{\ell}(h_{\mu}|_{k-1/2}M) \geq \nu_{\ell}(h_{\mu})$.
		
		By the explicit formula for the action of $\Gamma_{1}(4m)$ in the Weil representation $\rho_{L}$ given in \cite{ScheithauerWeil}, Proposition 4.5., the functions $h_{\mu}$ are modular forms of weight $k-1/2$ for $\Gamma_{1}(4mN)$ and character $\chi_{\mu}\left(\left(\begin{smallmatrix}a & b \\ c & d \end{smallmatrix}\right)\right) = e(-b\mu^{2}/4m)$. We write
		\[
		h_{\mu}|_{k-1/2}M = \left(h_{\mu}\bigg|_{k-1/2}\begin{pmatrix}2m & 0 \\0 & \frac{1}{2m}\end{pmatrix}\right)\bigg|_{k-1/2}\begin{pmatrix} \frac{1}{2m} & 0 \\0 & 2m\end{pmatrix}M.
		\]
		The inner function is now a modular form of weight $k-1/2$ for $\Gamma_{1}(16m^{3}N)$ and trivial character with algebraic Fourier coefficients and the same $\ell$-adic valuation as $h_{\mu}$. The product of the remaining two matrices can be written as $M'\left( \begin{smallmatrix}\alpha & \beta \\ 0 & \delta \end{smallmatrix}\right)$ with $M' \in \SL_{2}(\Z)$ and $\alpha,\beta,\delta \in \Q$ with $\nu_{\ell}(\delta) = 0$. It now follows from the $q$-expansion principle for $\Gamma_{1}(N)$ (see \cite{Katz}, Corollary 1.6.2., or \cite{DiamondIm}, Remark 12.3.5., for the $q$-expansion principle for integral weight modular forms for $\Gamma_{1}(N)$, and the proof of Lemma 1 in \cite{BruinierNonvanishing} for the transition to half-integral weight forms) that $h_{\mu}|_{k-1/2}M$ has algebraic coefficients with $\nu_{\ell}(h_{\mu}|_{k-1/2}M) \geq \nu_{\ell}(h_{\mu})$. The assumption $(\ell,2mN) = 1$ is again needed here.
	\end{proof}

	\begin{Proposition}\label{OneDiscriminant}
		Let $p$ be a prime with $(p,2mN) = 1$ and let $\psi = \big(\frac{\cdot}{p}\big)$. Suppose that $\phi \in J_{k,m}(\Gamma_{0}(N^{2})\ltimes (N\Z \times \Z),\chi)$ has integral algebraic Fourier coefficients. Let $\ell$ be a prime with $(\ell,2mNp(p-1)) = 1$ and let
		\[
		h := \phi|(1-B_{p})-\varepsilon \phi_{\psi} = 2\sum_{\substack{n,r \in \Z  \\ \psi(r^{2}-4mn) = -\varepsilon}}c(n,r)q^{n}\zeta^{r}.
		\] 
		Then $\nu_{\ell}(\phi|T_{p}-\chi(p)\varepsilon(p^{k-1} + p^{k-2})\phi) \geq \nu_{\ell}(h)$. In particular, if $\phi$ is an eigenform of $T_{p}$ with eigenvalue $\lambda_{p}$ and $\nu_{\ell}(\phi) = 0$, there is a discriminant $D= r^{2}-4mn < 0$ such that $\big( \frac{D}{p}\big) = -\varepsilon$ and $\nu_{\ell}(c(n,r)) \leq \nu_{\ell}(\lambda_{p}-\chi(p)\varepsilon (p^{k-1}+p^{k-2}))$.
	\end{Proposition}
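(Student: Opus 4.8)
The plan is to prove the displayed valuation inequality first and then read off the ``in particular'' statement. The identity $h = 2\sum_{\psi(r^2-4mn)=-\varepsilon}c(n,r)q^n\zeta^r$ is immediate from $\phi|(1-B_p)=\sum_{p\nmid D}c(n,r)q^n\zeta^r$ and $\phi_\psi=\sum\psi(D)c(n,r)q^n\zeta^r$ together with $\varepsilon^2=1$ (here $D=r^2-4mn$); note that every term of $h$ has $D<0$, since $\psi(D)=-\varepsilon\neq 0$ forces $p\nmid D$, while holomorphy forces $D\le 0$. Granting the inequality, if $\phi|T_p=\lambda_p\phi$ then the left-hand form equals $(\lambda_p-\chi(p)\varepsilon(p^{k-1}+p^{k-2}))\phi$, so $\nu_\ell(\lambda_p-\chi(p)\varepsilon(p^{k-1}+p^{k-2}))+\nu_\ell(\phi)\ge\nu_\ell(h)$. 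Using $\nu_\ell(\phi)=0$ and $\nu_\ell(2)=0$ (as $\ell$ is odd), we have $\nu_\ell(h)=\min_{\psi(D)=-\varepsilon}\nu_\ell(c(n,r))$, and the minimizing pair yields a negative discriminant $D$ with $\big(\tfrac{D}{p}\big)=-\varepsilon$ and $\nu_\ell(c(n,r))\le\nu_\ell(\lambda_p-\chi(p)\varepsilon(p^{k-1}+p^{k-2}))$, as required.

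For the inequality itself I would start from the explicit Fourier expansion of $\phi|T_p$. Since $(p,m)=1$ one has $C_p(n,r,m)=p\big(\tfrac{D}{p}\big)$, so the diagonal (middle) term of $\phi|T_p$ is exactly $\chi(p)p^{k-2}\phi_\psi$. Substituting $\phi_\psi=\varepsilon\,\phi|(1-B_p)-\varepsilon h$ into this diagonal term, the two resulting multiples of $\phi$ cancel against $-\chi(p)\varepsilon p^{k-2}\phi$, and one obtains
\[
\phi|T_p-\chi(p)\varepsilon(p^{k-1}+p^{k-2})\phi = -\chi(p)\varepsilon p^{k-2}\,h + R,
\]
where $R$ collects the two off-diagonal contributions of $T_p$ (the term with coefficient $c(p^2n,pr)$ and the $\lambda$-sum term) together with $-\chi(p)\varepsilon p^{k-1}\phi$ and $-\chi(p)\varepsilon p^{k-2}\phi|B_p$. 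Because $\chi(p)$, $\varepsilon$ and $p$ are $\ell$-adic units, $\nu_\ell(-\chi(p)\varepsilon p^{k-2}h)=\nu_\ell(h)$, so the whole claim reduces to showing $\nu_\ell(R)\ge\nu_\ell(h)$.

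To bound $R$ I would pass to the Fricke side, since the off-diagonal pieces of $T_p$ do not act diagonally on Fourier coefficients and so cannot be estimated termwise. The idea is to use the commutation relation $\phi|W_{N^2}|T_p=\overline\chi(p^2)\phi|T_p|W_{N^2}$ to transfer $\phi|T_p$ to the Hecke operator acting on $\Phi:=\phi|W_{N^2}^{-1}$, whose own diagonal part is $\overline\chi(p)p^{k-2}\Phi_\psi$, and then to evaluate the twist and projection of $\Phi$ through the closed formulas of Lemma \ref{CommutationRelations}, which compute $(\Phi)_\psi|W_{N^2p^2}$ and $\Phi|B_p|W_{N^2p^2}$ explicitly in the coefficients $c(n,r)$ of $\phi$, together with the elementary identity $\Phi|W_{N^2p^2}=p^{k}\phi(p^2\tau,pz)$. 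Since twisting and projecting raise the level from $N^2$ to $N^2p^2$, this requires decomposing $W_{N^2p^2}=W_{N^2}\!\left(\begin{smallmatrix}p&0\\0&1/p\end{smallmatrix}\right)$ and carrying the diagonal scaling through. The outcome I aim for is that $R$ is, up to an $\ell$-adic unit, the image under $W_{N^2p^2}$ of the ``project-and-twist'' combination $\Phi|(1-B_p)-\varepsilon\Phi_\psi$ built from $\Phi$ by the very recipe that produced $h$ from $\phi$.

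The final step is Theorem \ref{qExpansionPrinciple}: as $(\ell,2mNp)=1$, the Fricke involution $W_{N^2p^2}$ does not lower $\ell$-adic valuations, and the operators $B_p$ and $(\cdot)_\psi$ act diagonally on Fourier expansions (by $c(n,r)\mapsto c(n,r)$ if $p\mid D$ and $0$ otherwise, respectively $c(n,r)\mapsto\psi(D)c(n,r)$), so they too are valuation-nondecreasing. Combining these gives $\nu_\ell(R)\ge\nu_\ell(h)$ and hence the inequality. The main obstacle is the third step: keeping precise track, through the Fricke commutation, the level-raising rescaling, and Lemma \ref{CommutationRelations}, of the characters $\chi$ versus $\overline\chi$, the exact powers of $p$, and the $\chi(-1)$ normalization of $W_{N^2}^{-1}$ relative to $W_{N^2}$, so that the off-diagonal remainder $R$ assembles exactly into one Fricke image of a twist–projection combination with the same valuation as $h$.
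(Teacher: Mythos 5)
Your reduction of the ``in particular'' clause to the valuation inequality is fine, and your algebraic decomposition $\phi|T_p-\chi(p)\varepsilon(p^{k-1}+p^{k-2})\phi=-\chi(p)\varepsilon p^{k-2}h+R$ is correct. The gap is in the step you yourself flag as the main obstacle: the remainder $R$ is \emph{not}, up to an $\ell$-adic unit, the image under $W_{N^2p^2}$ of $\Phi|(1-B_p)-\varepsilon\Phi_\psi$ with $\Phi=\phi|W_{N^2}^{-1}$. By Lemma \ref{CommutationRelations} that Fricke image is supported on indices with $p\mid r$ and involves the coefficients $c(n/p^2,r/p)$ with the factor $p^{k}-p^{k-1}$, whereas $R$ contains the full term $-\chi(p)\varepsilon p^{k-1}\phi$ (supported on all $(n,r)$), the term $\sum c(p^2n,pr)q^n\zeta^r$ of opposite ``direction'', and the factor $p^{2k-3}$ in front of $c(n/p^2,r/p)$. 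No unit rescaling reconciles these. Worse, even if such an identity held, your final step would only give $\nu_\ell(R)\ge\nu_\ell\bigl(\Phi|(1-B_p)-\varepsilon\Phi_\psi\bigr)\ge\nu_\ell(\Phi)\ge\nu_\ell(\phi)$, because the twist--projection combination is applied to $\Phi=\phi|W_{N^2}^{-1}$, whose coefficients bear no termwise relation to those of $\phi$; this bounds $R$ by $\nu_\ell(\phi)$, not by $\nu_\ell(h)$, and with that bound the proposition is vacuous.

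The valuation $\nu_\ell(h)$ can only be transported if the twist--projection combination is formed from $\phi$ \emph{first} (producing $h$) and the Fricke involution applied \emph{afterwards}, so that Theorem \ref{qExpansionPrinciple} yields $\nu_\ell(h|W_{N^2p^2})\ge\nu_\ell(h)$. This is how the paper argues: it writes $h|W_{N^2p^2}$ via Lemma \ref{CommutationRelations} as an explicit expression in the coefficients $c'(n,r)$ of $\phi'=\phi|W_{N^2}$, extracts from it three coefficientwise congruences modulo $\ell^{\nu}$ (separating the cases $p\mid r,\ p\nmid n$; $p\mid r,\ p\,\|\,n$; $p\mid r,\ p^2\mid n$, and using $\ell\nmid p(p-1)$ to divide by units), reduces to $p\mid r$ by the lattice invariance $c'(n,r)=c'(n+rN\lambda+mN^2\lambda^2,r+2mN\lambda)$, verifies directly that these congruences force $\nu_\ell\bigl(\phi'|T_p-\overline{\chi}(p)\varepsilon(p^{k-1}+p^{k-2})\phi'\bigr)\ge\nu_\ell(h)$, and only then transfers back to $\phi$ using $\phi'|T_p=\overline{\chi}(p^2)\,\phi|T_p|W_{N^2}$ and Theorem \ref{qExpansionPrinciple}. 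You would need to restructure your argument along these lines; the attempt to bound the off-diagonal remainder of $T_p$ acting on $\phi$ itself by a single Fricke identity does not go through.
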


\begin{proof}
	Let $h' = h|W_{N^{2}p^{2}}$. If we write $\big(\begin{smallmatrix}0 & -1/Np \\ Np & 0 \end{smallmatrix}\big) = \big(\begin{smallmatrix}0 & - 1 \\ 1 & 0 \end{smallmatrix}\big)\big(\begin{smallmatrix}Np & 0 \\ 0 & 1/Np\end{smallmatrix}\big)$ Theorem \ref{qExpansionPrinciple} implies that $h'$ has algebraic coefficients with $\nu:=\nu_{\ell}(h) = \nu_{\ell}(h')$. We get equality instead of an inequality here since $W_{N^{2}p^{2}}$ is an involution (up to a sign). Further, let $c'(n,r)$ denote the Fourier coefficients of $\phi' =\phi|W_{N^{2}}$, which are again algebraic. We write
	\begin{align*}
	h '  &= \phi'|W_{N^{2}}^{-1}|W_{N^{2}p^{2}} - \phi'|W_{N^{2}}^{-1}|B_{p}|W_{N^{2}p^{2}} -  \varepsilon (\phi'|W_{N^{2}}^{-1})_{\psi}|W_{N^{2}p^{2}}.
	\end{align*}
	It is easily checked that $\phi'|W_{N^{2}}^{-1}|W_{N^{2}p^{2}}(\tau,z) = p^{k}\phi'(p^{2}\tau,pz)$. Since $\phi'$ transforms with character $\overline{\chi}$, we get from Lemma \ref{CommutationRelations} the formula:
	\begin{align*}
	h' &=  (p^{k}-p^{k-1})\sum_{\substack{n,r \in \Z \\ r^{2} \leq 4mn}}c'(n/p^{2},r/p)q^{n}\zeta^{r} - \chi(p)\sum_{\substack{n,r \in \Z \\ r^{2} \leq 4mn \\ p \mid r}}\left( \frac{r^{2}-4mn}{p}\right)c'(n,r)q^{n}\zeta^{r} \\
	&\quad  - \chi(p)\varepsilon p \sum_{\substack{n,r \in \Z \\ r^{2} \leq 4mn \\ p \mid n,r}}c'(n,r)q^{n}\zeta^{r} + \chi(p)\varepsilon \sum_{\substack{n,r \in \Z \\ r^{2} \leq 4mn \\ p \mid r}}c'(n,r)q^{n}\zeta^{r}.
	\end{align*}
	This gives the relations
	\begin{align}\label{DivisibilityRelations}
	\nu_{\ell}\left(\left(\frac{r^{2}-4mn}{p}\right)c'(n,r) - \varepsilon c'(n,r)\right) &\geq \nu, \quad \text{ if } p \mid r, \ p \nmid n, \\
	\nu_{\ell}(c'(n,r)) &\geq \nu, \quad \text{ if } p \mid r, \ p\mid n, \ p^{2} \nmid n, \\
	\nu_{\ell}(c'(n,r)-\overline{\chi}(p)\varepsilon p^{k-1}c'(n/p^{2},r/p)) &\geq \nu, \quad \text{ if } p \mid r, \ p^{2} \mid n,
	\end{align}
	where we divided by $\chi(p),\varepsilon,p^{k-1}$ or $p-1$ at some places. Using $c'(n,r) = c'(n + rN\lambda + mN^{2}\lambda^{2},r+2mN\lambda)$ for $\lambda \in \Z$ and the assumption $(p,2mN) = 1$ we may assume that $p \mid r$. For $p \mid r$ and $(p,2mN) = 1$, the coefficient $c'^{*}(n,r)$ of $\phi'|T_{p}$ is given by
	\begin{align*}
	c'^{*}(n,r) &= c'(p^{2}n,pr) + \overline{\chi}(p)\left( \frac{r^{2}-4mn}{p}\right)p^{k-2}c'(n,r) + \overline{\chi}(p^{2})p^{2k-3}c'(n/p^{2},r/p).
	\end{align*}
	Using the relations (2), (3), (4) it is straightfoward to verify that
	\[
	\nu_{\ell}(c'^{*}(n,r) - \overline{\chi}(p)\varepsilon(p^{k-1}+p^{k-2})c'(n,r)) \geq \nu.
	\]
	The result now follows from $\phi'|T_{p} = \phi|W_{N^{2}}^{-1}|T_{p} = \overline{\chi}(p^{2})\phi|T_{p}|W_{N^{2}}^{-1}$ and Theorem \ref{qExpansionPrinciple}. 
	\end{proof}

	\begin{Proposition}\label{InfinitelyDiscriminants}
		Let $p$ be a prime with $(p,2mN) = 1$ and let $\phi \in J_{k,m}(\Gamma_{0}(N^{2})\ltimes (N\Z \times \Z),\chi)$ with integral algebraic Fourier coefficients which is an eigenform of $T_{p}$ with eigenvalue $\lambda_{p}$. Let $\ell$ be a prime with $(\ell,2mNp(p-1)) = 1$ and let $\varepsilon \in \{\pm 1\}$. Suppose that there is some discriminant $D^{*} = (r^{*})^{2}-4mn^{*} < 0$ with $\big(\frac{D^{*}}{p}\big) = \varepsilon$ and $\nu_{\ell}(c(n^{*},r^{*})) = 0$. Then there exist infinitely many discriminants $D = r^{2}-4mn < 0$ with pairwise distinct square free parts and $\big(\frac{D}{p}\big) = -\varepsilon$ such that $\nu_{\ell}(c(n,r)) \leq \nu_{\ell}(\lambda_{p}-\chi(p)\varepsilon(p^{k-1}+p^{k-2}))$.
	\end{Proposition}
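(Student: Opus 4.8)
The plan is to argue by induction on the number of discriminants already produced, carving out one prime at a time a $T_p$-eigenform to which Proposition~\ref{OneDiscriminant} applies and which is supported away from the square-free parts found so far. First I would note that the hypothesis $\nu_\ell(c(n^*,r^*)) = 0$ together with $\ell$-integrality of all coefficients gives $\nu_\ell(\phi) = 0$, so Proposition~\ref{OneDiscriminant} already yields a first discriminant $D_1 < 0$ with $\left(\frac{D_1}{p}\right) = -\varepsilon$ and $\nu_\ell(c(n_1,r_1)) \le B := \nu_\ell(\lambda_p - \chi(p)\varepsilon(p^{k-1}+p^{k-2}))$; here $\left(\frac{D_1}{p}\right) = -\varepsilon \ne 0$ forces $p \nmid D_1$.

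The central device is a projection operator onto a prescribed quadratic class. For a prime $q \nmid 2mN\ell p$ and $\eta \in \{\pm1\}$, with $\psi_q = \left(\frac{\cdot}{q}\right)$, I would set $P^q_\eta \phi = \tfrac12\big(\phi|(1-B_q) + \eta\,\phi_{\psi_q}\big)$. By Lemma~\ref{TwistingProjectionOperators} this keeps exactly the coefficients $c(n,r)$ with $\left(\frac{r^2-4mn}{q}\right) = \eta$, unchanged, and discards the rest (here $\ell \ne 2$ makes $\tfrac12$ an $\ell$-adic unit). Using the commutation relations $(\phi|T_p)_{\psi_q} = \phi_{\psi_q}|T_p$ and $\phi|T_p|B_q = \phi|B_q|T_p$ (valid as $(p,q)=1$), one checks that $P^q_\eta$ commutes with $T_p$ and preserves both the eigenvalue $\lambda_p$ and $\ell$-integrality, at the cost of raising the level from $N$ to $Nq$ while keeping the character $\chi$ (note $\psi_q^2$ is trivial).

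Now assume discriminants $D_1,\dots,D_t < 0$ with pairwise distinct square-free parts $d_1,\dots,d_t$, each with $\left(\frac{D_i}{p}\right) = -\varepsilon$ (so $p \nmid D_i$) and $\nu_\ell(c(n_i,r_i)) \le B$, have been found. Since $\left(\frac{d_i}{p}\right) = -\varepsilon \ne \varepsilon = \left(\frac{\mathrm{sf}(D^*)}{p}\right)$, the positive integer $D^* d_i$ is not a perfect square, so I can pick distinct primes $q_1,\dots,q_t$, coprime to $2mN\ell p D^* d_1\cdots d_t$, with $\left(\frac{D^* d_i}{q_i}\right) = -1$; setting $\eta_i = \left(\frac{D^*}{q_i}\right)$ then gives $\left(\frac{d_i}{q_i}\right) = -\eta_i$. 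I would form $\tilde\phi = P^{q_1}_{\eta_1}\cdots P^{q_t}_{\eta_t}\phi \in J_{k,m}(\Gamma_0(N'^2)\ltimes(N'\Z\times\Z),\chi)$ with $N' = Nq_1\cdots q_t$, a $T_p$-eigenform of eigenvalue $\lambda_p$ with integral algebraic coefficients whose surviving coefficients are exactly the $c(n,r)$ with $\left(\frac{r^2-4mn}{q_i}\right) = \eta_i$ for all $i$; the seed survives (as $\left(\frac{D^*}{q_i}\right) = \eta_i$), so $\nu_\ell(\tilde\phi) = 0$. Applying Proposition~\ref{OneDiscriminant} to $\tilde\phi$ gives a discriminant $D < 0$ with $\left(\frac{D}{p}\right) = -\varepsilon$ and $\nu_\ell(c_{\tilde\phi}(n,r)) \le B$; being nonzero it survived the projections, so $c_{\tilde\phi}(n,r) = c(n,r)$ and $\left(\frac{D}{q_i}\right) = \eta_i \ne \left(\frac{d_i}{q_i}\right)$ for all $i$, whence $\mathrm{sf}(D) \notin \{d_1,\dots,d_t\}$. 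Taking $D_{t+1} = D$ and iterating produces infinitely many discriminants as required.

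The main obstacle I anticipate is twofold: assembling $P^q_\eta$ from the operators of Lemma~\ref{TwistingProjectionOperators} and confirming that it is genuinely $T_p$-equivariant with unchanged eigenvalue and $\ell$-integrality, and choosing the auxiliary primes $q_i$ so as to separate the seed from all previous square-free parts at once. The latter rests entirely on the non-square argument above, i.e.\ on the sign discrepancy $\left(\frac{\cdot}{p}\right) = \varepsilon$ for the seed versus $-\varepsilon$ for the output of Proposition~\ref{OneDiscriminant}, which is what guarantees that $D^* d_i$ is never a square and hence that a separating prime $q_i$ always exists.
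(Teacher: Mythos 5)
Your proposal is correct and follows essentially the same route as the paper: Proposition~\ref{OneDiscriminant} produces each new discriminant, the sign discrepancy $\big(\tfrac{D^*}{p}\big)=\varepsilon$ versus $\big(\tfrac{D_i}{p}\big)=-\varepsilon$ separates square-free parts and guarantees the auxiliary primes $q_i$ exist, and the operator $\tfrac{1}{2}\big(\phi|(1-B_{q})+\eta\,\phi_{\psi_q}\big)$ excises the classes already found while preserving the seed, the eigenvalue $\lambda_p$, and $\ell$-integrality. The only (cosmetic) difference is that you apply all $t$ projections at once at level $Nq_1\cdots q_t$ and invoke Proposition~\ref{OneDiscriminant} afresh, whereas the paper applies a single projection and recurses via the induction hypothesis; unwinding that recursion yields exactly your iterated projection.
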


	\begin{proof}
		We prove by induction on $t$ that there are negative discriminants $D_{1} = r_{1}^{2}-4mn_{1}, \dots,D_{t} = r_{t}^{2}-4mn_{t}$ with pairwise distinct square free parts such that $\big(\frac{D_{j}}{p}\big) = -\varepsilon$ and $\nu_{\ell}(c(n_{j},r_{j})) \leq \nu_{\ell}(\lambda_{p}-\chi(p)\varepsilon(p^{k-1}+p^{k-2}))$ for $j = 1,\dots,t$.
		
		For $t = 1$ the statement follows from the last proposition.
		
		Now let us assume that for $\phi$ as above and fixed $t > 1$ we can always find $t-1$ discriminants with the desired properties. By the last proposition we find a discriminant $D_{1}= r_{1}^{2}-4mn_{1} < 0$ with $\big( \frac{D_{1}}{p}\big) = -\varepsilon$ and $\nu_{\ell}(c(n_{1},r_{1})) \leq \nu_{\ell}(\lambda_{p}-\chi(p)\varepsilon(p^{k-1}+p^{k-2}))$. 
		Since $(\frac{D_{1}}{p}\big) = -\varepsilon$ and $\big(\frac{D^{*}}{p} \big) = \varepsilon$ the square free parts of $D_{1}$ and $D^{*}$ are different.
		Thus we can choose a prime $q$ with
		\[
		(q,\ell) = 1, \quad (q,p) = 1, \quad \left( \frac{D^{*}}{q}\right) = +\varepsilon, \quad \left(\frac{D_{1}}{q}\right) = -\varepsilon.
		\]
		Put $\psi = \big(\frac{\cdot}{q}\big)$ and consider the function
		\[
		\phi_{1} = \frac{1}{2}\left(\phi|(1-B_{q}) + \varepsilon \phi_{\psi} \right) \in J_{k,m}(\Gamma_{0}(N^{2}q^{2}) \ltimes (Nq\Z \times \Z),\chi\psi^{2}).
		\]
		Then $\phi_{1}$ has the Fourier expansion
		\[
		\phi_{1} = \sum_{\substack{n,r \in \Z\\ r^{2}\leq 4mn}}c_{1}(n,r)q^{n}\zeta^{r}, \quad \text{ with } c_{1}(n,r) = 			\begin{cases} 
		c(n,r) , & \text{if } \big(\frac{r^{2}-4mn}{q}\big) = \varepsilon, \\
		0, & \text{otherwise}.
		\end{cases}
		\]
		Since $T_{p}$ commutes with the twisting and projection operators in the definition of $\phi_{1}$, the function $\phi_{1}$ is also an eigenform of $T_{p}$ with eigenvalue $\lambda_{p}$. Further, $c_{1}(n^{*},r^{*}) = c(n^{*},r^{*})$, so $\nu_{\ell}(\phi_{1}) = \nu_{\ell}(\phi) = 0$. We can apply the induction hypothesis on $\phi_{1}$ to find discriminants $D_{2} = r_{2}^{2}-4mn_{2}, \dots , D_{t} = r_{t}^{2}-4mn_{t}$ with pairwise distinct square free parts such that $\big(\frac{D_{j}}{p}\big) = -\varepsilon$ and $\nu_{\ell}(c(n_{j},r_{j})) = \nu_{\ell}(c_{1}(n_{j},r_{j})) \leq \nu_{\ell}(\lambda_{p}-\chi(p)\varepsilon(p^{k-1}+p^{k-2}))$. Since $\big(\frac{D_{1}}{q}\big) = -\varepsilon$, we have $c_{1}(n,r) = 0$ for all $n,r$ such that $r^{2}-4mn$ has the same square free part as $D_{1}$, so the square free parts of $D_{2},\dots,D_{t}$ are different from the square free part of $D_{1}$. This completes the proof.
	\end{proof}
		
	So far, the discriminants given in Proposition \ref{InfinitelyDiscriminants} need not be fundamental. To find fundamental discriminants with the properties as in the proposition, we will make use of the relations given by the Hecke eigenform equations.
	
	\begin{Lemma}\label{HeckeEigenformRelation}
		Let $\phi \in J_{k,m}(\Gamma_{0}(N^{2}) \ltimes (N\Z \times \Z),\chi)$ be an eigenform of all Hecke operators $T_{p},(p,mN) = 1$, with algebraic Fourier coefficients $c(n,r)$. Let $\ell$ be a prime and $D = r^{2}-4mn \leq 0$ a fundamental discriminant. Then $\nu_{\ell}(c(f^{2}n,fr)) \geq \nu_{\ell}(c(n,r))$ for all $f \in \Z, (f,mN) = 1$.
	\end{Lemma}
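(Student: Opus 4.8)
The plan is to exploit the Hecke eigenform equation $\phi|T_{p} = \lambda_{p}\phi$ to write $c(f^{2}n,fr)$ as an $\ell$-integral linear combination of coefficients $c(g^{2}n,gr)$ with $g$ having strictly fewer prime factors than $f$, and then to induct on the number $\Omega(f)$ of prime factors of $f$ counted with multiplicity. Before starting the induction I would record the auxiliary fact that every eigenvalue $\lambda_{p}$ with $(p,mN)=1$ satisfies $\nu_{\ell}(\lambda_{p})\geq 0$ for every prime $\ell$: the Hecke eigenvalues of $\phi$ are algebraic integers. Concretely, all the structure constants occurring in the Fourier expansion of $\phi|T_{p}$ (powers of $p$, values of $\chi$, Kronecker symbols) are $\ell$-integral, so $\nu_{\ell}(\phi|T_{p})\geq\nu_{\ell}(\phi)$, which forces $\nu_{\ell}(\lambda_{p})\geq 0$.

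For the inductive step I would write $f=pf_{1}$ with $p$ prime and $(p,mN)=(f_{1},mN)=1$, set $n_{1}=f_{1}^{2}n$, $r_{1}=f_{1}r$, and observe $c(f^{2}n,fr)=c(p^{2}n_{1},pr_{1})$. Applying the coefficient formula for $\phi|T_{p}$ at the index $(n_{1},r_{1})$, whose discriminant is $\Delta_{1}=f_{1}^{2}D$, and using that $(p,2mN)=1$ makes exactly one residue $\lambda\bmod p$ contribute to the last sum, gives
\[
\lambda_{p}\,c(n_{1},r_{1}) = c(p^{2}n_{1},pr_{1}) + \chi(p)\left(\tfrac{\Delta_{1}}{p}\right)p^{k-2}c(n_{1},r_{1}) + \chi(p^{2})p^{2k-3}c(n',r'),
\]
where $(n',r')$ is the unique surviving index, of discriminant $\Delta_{1}/p^{2}$. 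The crucial observation is that, because $D$ is fundamental, $D/q^{2}$ fails to be a discriminant for every prime $q$ (either $q^{2}\nmid D$, or $q=2$ and $D/4\equiv 2,3\pmod 4$). Hence if $p\nmid f_{1}$ then $\Delta_{1}/p^{2}=f_{1}^{2}(D/p^{2})$ is not a discriminant, so $c(n',r')=0$ and we obtain $c(p^{2}n_{1},pr_{1})=\bigl(\lambda_{p}-\chi(p)(\tfrac{\Delta_{1}}{p})p^{k-2}\bigr)c(n_{1},r_{1})$, an $\ell$-integral multiple of $c(f_{1}^{2}n,f_{1}r)$.

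If instead $p\mid f_{1}$, then $p\mid\Delta_{1}$, so the middle Kronecker term vanishes, and the contributing residue is $\lambda\equiv 0$, giving the surviving index $(n',r')=\bigl((f_{1}/p)^{2}n,(f_{1}/p)r\bigr)$ with $n'=n_{1}/p^{2}$, $r'=r_{1}/p$. This yields $c(p^{2}n_{1},pr_{1})=\lambda_{p}\,c(f_{1}^{2}n,f_{1}r)-\chi(p^{2})p^{2k-3}c\bigl((f_{1}/p)^{2}n,(f_{1}/p)r\bigr)$. In both cases $c(f^{2}n,fr)$ is an $\ell$-integral combination of coefficients $c(g^{2}n,gr)$ with $\Omega(g)<\Omega(f)$, so the inductive hypothesis together with $\nu_{\ell}(\lambda_{p}),\nu_{\ell}(p),\nu_{\ell}(\chi(p))\geq 0$ gives $\nu_{\ell}(c(f^{2}n,fr))\geq\nu_{\ell}(c(n,r))$; the base case $f=1$ is trivial.

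I expect the main obstacle to be the vanishing of the lowering term $c(n',r')$ in the case $p\nmid f_{1}$: this is exactly where the fundamentality of $D$ enters, and it must be phrased so as to cover the prime $2$, where $4\mid D$ can occur yet $D/4$ is still not a discriminant. A secondary point requiring care is the identification of the surviving index in the last Hecke sum, namely checking that $(p,2mN)=1$ leaves precisely one residue $\lambda\bmod p$ and that in the case $p\mid f_{1}$ this residue is $0$, so that the lowering term is genuinely $c\bigl((f_{1}/p)^{2}n,(f_{1}/p)r\bigr)$ and not a shifted coefficient.
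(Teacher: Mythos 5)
Your proposal is correct and follows essentially the same route as the paper: apply the Hecke relation $\phi|T_{p}=\lambda_{p}\phi$ at the index $(f_{1}^{2}n,f_{1}r)$, use the fundamentality of $D$ to kill the lowering term unless $p\mid f_{1}$ (in which case it collapses to $c((f_{1}/p)^{2}n,(f_{1}/p)r)$), and induct, using that $\lambda_{p}$ is an algebraic integer. The only differences are cosmetic: you induct on $\Omega(f)$ rather than $|f|$ (so you should also dispose of $f=-1$ via $c(n,-r)=\pm c(n,r)$, as the paper does), and you are in fact more explicit than the paper about the $p=2$, $4\mid D$ case.
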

	
	\begin{proof}
		By induction on $|f|$: For $f = 1$ there is nothing to show and for $f = -1$ we have $c(n,r) = (-1)^{k}c(n,-r)$. For $|f| > 1$ choose a prime divisor $p$ of $f$ and write $f = pf_{1}$. The action of $T_{p}$ on $\phi$ gives the relation
		\begin{align*}
		\lambda_{p}c(f_{1}^{2}n,f_{1}r) &= c(f^{2}n,fr) + \chi(p)\left(\frac{ f_{1}^{2}D}{p}\right)p^{k-2}c(f_{1}^{2}n,f_{1}r)  \\
		&\quad + \chi(p^{2})p^{2k-3}\sum_{\lambda(p)}c((f_{1}^{2}n+f_{1}rN\lambda + mN^{2}\lambda^{2})/p^{2}, (f_{1}r+2mN\lambda)/p).
		\end{align*}
		We find
		\begin{align*}
		\nu_{\ell}(c(f^{2}n,fr)) &\geq \min\bigg(\nu_{\ell}(\lambda_{q}c(f_{1}^{2}n,f_{1}r)),  \nu_{\ell}(p^{k-2}c(f_{1}^{2}n,f_{1}r)), \\
		& \qquad \nu_{\ell}\bigg(p^{2k-3}\sum_{\lambda(p)}c((f_{1}^{2}n+f_{1}rN\lambda + mN^{2}\lambda^{2})/p^{2}, (f_{1}r+2mN\lambda)/p)\bigg) \bigg).
		\end{align*}
		In the sum over $\lambda$ the summands can be nonzero only if $p \mid f_{1}$ since $D$ is fundamental, in which case the sum is just $c((f_{1}/p)^{2}n,(f_{1}/p)r)$ as $(p,mN) = 1$. Note that $\lambda_{p}$ is an algebraic integer, so all the terms in the minimum are at least $\nu_{\ell}(c(n,r))$ by the induction hypothesis, which proves the claim.
	\end{proof}
	
	We would now like to write each $D = r^{2}-4mn$ in Proposition \ref{InfinitelyDiscriminants} as $D = f^{2}D_{0}$ with a fundamental discriminant $D_{0} = r_{0}^{2}-4mn_{0}$ and $f \in \Z$, and then use the relations given by the eigenform equation to show that $\nu_{\ell}(c(n_{0},r_{0})) \leq \nu_{\ell}(c(n,r))$. But in general, $D_{0}$ will not necessarily be a square mod $4m$, as the example $m = 3$ and $D = 0^{2}-4\cdot 3 \cdot 3 = 9\cdot(-4)$ shows, and even if $D_{0}$ is a square mod $4m$, the eigenform equations will in general not give information about the relation of $c(n,r)$ and $c(f^{2}n_{0},fr_{0})$ if $r \not\equiv fr_{0} \mod 2m$. These problems can only arise if $(D,mN) > 1$:
	
	\begin{Lemma}\label{FundamentalDiscriminants}
		Let $\phi \in J_{k,m}(\Gamma_{0}(N^{2})\ltimes (N\Z \times \Z),\chi)$ with Fourier coefficients $c(n,r)$. Let $D = r^{2}-4mn< 0$ with $(D,mN) = 1$ and write $D = f^{2}D_{0}$ with a fundamental discriminant $D_{0} < 0$ and some $f \in \Z$. Then $D_{0}$ is a square mod $4m$, i.e. $D_{0} = r_{0}^{2} -4mn_{0}$, and $c(n,r) = c(f^{2}n_{0},f r_{0})$.
	\end{Lemma}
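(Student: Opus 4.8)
The plan is to produce an integer $r_{0}$ that is a square root of $D_{0}$ modulo $4m$ (so that $n_{0}:=(r_{0}^{2}-D_{0})/(4m)\in\Z$, giving $D_{0}=r_{0}^{2}-4mn_{0}$ and in particular showing $D_{0}$ is a square mod $4m$) and which additionally satisfies $fr_{0}\equiv r\pmod{2mN}$. Granting such an $r_{0}$, the pair $(f^{2}n_{0},fr_{0})$ has discriminant $(fr_{0})^{2}-4m(f^{2}n_{0})=f^{2}D_{0}=D$, the same as $(n,r)$. Writing $r=fr_{0}+2mN\lambda$ and invoking the periodicity relation $c(n,r)=c(n+rN\lambda+mN^{2}\lambda^{2},r+2mN\lambda)$ recorded after the definition of Jacobi forms, together with the fact that a coefficient $c(n',r')$ is determined by its discriminant once $r'$ is fixed, one gets $c(f^{2}n_{0},fr_{0})=c(n,r)$. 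So the whole lemma reduces to finding $r_{0}$ with
\[
r_{0}^{2}\equiv D_{0}\pmod{4m},\qquad fr_{0}\equiv r\pmod{2mN}.
\]

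I would build $r_{0}$ by the Chinese Remainder Theorem, prescribing its residue modulo $4$ and modulo each odd prime power $p^{a}\,\|\,mN$ separately. Since $(D,mN)=1$ and $D=f^{2}D_{0}$, we have $(f,mN)=(D_{0},mN)=1$. At an odd prime $p\mid mN$ the element $f$ is invertible, and because $p^{v_{p}(m)}\mid 4mn$ we have $D\equiv r^{2}\pmod{p^{v_{p}(m)}}$, whence $D_{0}\equiv(f^{-1}r)^{2}$ there; prescribing $r_{0}\equiv f^{-1}r\pmod{p^{v_{p}(mN)}}$ then forces both congruences at $p$ at once, using $v_{p}(mN)\ge v_{p}(m)$. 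The genuinely delicate point, and the main obstacle, is the prime $2$, because $f$ need not be odd: when $r$ is even one really can have $2\mid f$ (for instance $m=5$, $r=2$, $n=1$, $D=-16=2^{2}\cdot(-4)$, so $f=2$, $D_{0}=-4$).

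I would resolve the prime $2$ by a short case split. If $f$ is odd then $(f,2mN)=1$, and choosing $r_{0}\equiv f^{-1}r$ modulo $\mathrm{lcm}(4m,2mN)$ immediately yields $r_{0}^{2}\equiv f^{-2}D\equiv D_{0}\pmod{4m}$ and $fr_{0}\equiv r\pmod{2mN}$ simultaneously. If $f$ is even, then $(f,mN)=1$ forces $m$ and $N$ odd, and $D=f^{2}D_{0}$ is even so $r$ is even; here $4m=4\cdot m$ and $2mN=2\cdot mN$ with $m,N$ odd. Under CRT the linear condition $fr_{0}\equiv r\pmod{2mN}$ splits into $r_{0}\equiv f^{-1}r\pmod{mN}$ and $fr_{0}\equiv r\pmod 2$, the latter holding automatically since both sides are even; the quadratic condition $r_{0}^{2}\equiv D_{0}\pmod{4m}$ splits into $r_{0}^{2}\equiv D_{0}\pmod m$ (already guaranteed by the odd‑prime analysis) and $r_{0}^{2}\equiv D_{0}\pmod 4$. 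The crux is that the apparent clash at $2$ dissolves: the linear condition imposes nothing on $r_{0}\bmod 4$, so I am free to set $r_{0}\bmod 4$ equal to a square root of $D_{0}$ mod $4$, which exists precisely because $D_{0}$, being a fundamental discriminant, satisfies $D_{0}\equiv 0$ or $1\pmod 4$. Combining the residues mod $4$ and mod $mN$ by CRT produces the desired $r_{0}$, completing the argument in both cases.
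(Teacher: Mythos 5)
Your proof is correct, and it rests on the same two ingredients as the paper's: the lattice periodicity $c(n,r)=c(n+rN\lambda+mN^{2}\lambda^{2},r+2mN\lambda)$, which (as you observe) identifies coefficients of pairs having the same discriminant and $r$-components congruent mod $2mN$, and the coprimality $(f,mN)=(D_{0},mN)=1$. The organization, however, is genuinely different. The paper reduces to $f=p$ prime and peels off one prime at a time: periodicity lets one assume $p\mid r$, and then $p^{2}\mid 4mn$ with $(p,2m)=1$ forces $p^{2}\mid n$. You instead solve the two congruences $r_{0}^{2}\equiv D_{0}\pmod{4m}$ and $fr_{0}\equiv r\pmod{2mN}$ in one shot by CRT and only then invoke periodicity once. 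What your version buys is that it forces an explicit treatment of the prime $2$ --- precisely the case the paper dismisses with ``we leave the case $p=2$ to the reader'' --- and your case split ($f$ odd versus $f$ even, the latter forcing $mN$ odd and $r$ even, so that the linear condition mod $2$ is vacuous and $r_{0}\bmod 4$ is free to be chosen as a square root of $D_{0}$, which exists because $D_{0}\equiv 0,1\pmod 4$) isolates exactly where the hypothesis that $D_{0}$ is a discriminant enters. Your example $m=5$, $D=-16=2^{2}\cdot(-4)$, showing that $f$ can genuinely be even under the hypothesis $(D,mN)=1$, is a useful sanity check. In short: same mechanism, different decomposition, and your write-up is the more complete of the two.
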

	
	\begin{proof}
		It suffices to show the claim in the case that $f = p$ is a prime with $(p,mN) = 1$ and $D_{0} <0 $ an arbitrary (i.e. not necessarily fundamental) discriminant.
		
		We will use that $c(n,r) = c(n+ rN\lambda + mN^{2}\lambda^{2},r+2mN\lambda)$ holds for every $\lambda \in \Z$.
	
		Assume that $p$ is odd. Since $2mN$ is invertible mod $p$ we can achieve $r + 2mN \lambda \equiv 0 (p)$, so we can assume that $p \mid r$. Write $r = pr_{0}$. Then $p^{2}r_{0}^{2}-4mn = p^{2}D_{0}$ implies $p^{2} \mid 4mn$, and $(p,2m) = 1$ shows $p^{2} \mid n$, i.e. $n = p^{2}n_{0}$.
		
		We leave the case $p = 2$ to the reader.
 	\end{proof}
	
	If $\phi \in J_{k,m}(\Gamma_{0}(N^{2})\ltimes (N\Z \times \Z),\chi)$ is a Jacobi form with integral algebraic coefficients which is an eigenfunction of all Hecke operators $T_{p}$ for $(p,mN) = 1$ with corresponding eigenvalues $\lambda_{p}$, we define the set	
	\begin{align}\label{ExceptionalPrimes1}
	A(p,\lambda_{p}) &= \{\ell \text{ prime}: \nu_{\ell}(\lambda_{p}-\chi(p)\varepsilon (p^{k-1}+p^{k-2})) > 0 \text{ for some $\varepsilon  \in\{ \pm 1\}$}\} \notag \\
	&\qquad\qquad\qquad\qquad\qquad\qquad\qquad\qquad\qquad\cup\{\ell \text{ prime}: \ell \mid p(p-1)\}.
	\end{align}
	
	The following theorem is a more precise variant of Theorem \ref{MainTheorem} from the introduction.

	\begin{Theorem}\label{MainTheoremTechnical}
		Let $\phi \in J_{k,m}(\Gamma_{0}(N^{2})\ltimes (N\Z \times \Z),\chi)$ be a Jacobi form with integral algebraic coefficients which is an eigenfunction of all Hecke operators $T_{p}$ for $(p,mN) = 1$. Let $S = \{p_{1},\dots,p_{s}\}$ be a finite set of odd primes which are coprime to $mN$ and let $\varepsilon_{1},\dots,\varepsilon_{s} \in \{\pm 1\}$. Let $\ell$ be a prime with $(\ell,2mN) = 1$, $\ell \notin A(p_{j},\lambda_{p_{j}})$ for all $p_{j} \in S$ and $\nu_{\ell}(c(n,r)) = 0$ for some $D = r^{2}-4mn < 0$ with $(D,mN) = 1$. Then there exist infinitely many fundamental discriminants $D = r^{2}-4mn < 0$ prime to $mN$ such that $\big(\frac{D}{p_{j}}\big) = \varepsilon_{j}$ for all $p_{j} \in S$ and $\nu_{\ell}(c(n,r)) = 0$.
	\end{Theorem}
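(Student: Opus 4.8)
The plan is to impose the local conditions $\left(\frac{D}{p_j}\right)=\varepsilon_j$ one prime at a time using the operators of Lemma \ref{TwistingProjectionOperators}, arranging that at every stage a Fourier coefficient of $\ell$-adic valuation $0$ survives, and only at the end to pass from arbitrary discriminants to fundamental ones via Lemmas \ref{FundamentalDiscriminants} and \ref{HeckeEigenformRelation}. First I would secure coprimality to $mN$: applying $\phi\mapsto\phi|(1-B_p)$ for each prime $p\mid mN$ in turn (using the remark after Lemma \ref{TwistingProjectionOperators} for $p\mid m$, and enlarging the level to absorb these primes) replaces $\phi$ by a form $\phi^\flat$ supported only on $(n,r)$ with $(r^2-4mn,mN)=1$ and with $c_{\phi^\flat}(n,r)=c(n,r)$ there. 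Since $1-B_p$ commutes with every $T_q$, $(q,mN)=1$, and since the given discriminant $D$ with $\nu_\ell(c(n,r))=0$ is coprime to $mN$ and hence survives, $\phi^\flat$ is still an eigenform of all such $T_q$ and satisfies $\nu_\ell(\phi^\flat)=0$.

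Next I would fix the signs at $p_1,\dots,p_{s-1}$ by induction. Suppose $\phi^{(j-1)}$ has been produced, supported on $(D,mN)=1$ and on $\left(\frac{D}{p_i}\right)=\varepsilon_i$ for $i<j$, with $\nu_\ell(\phi^{(j-1)})=0$ and $\phi^{(j-1)}$ an eigenform of $T_{p_j}$ with eigenvalue $\lambda_{p_j}$. Because $\ell\notin A(p_j,\lambda_{p_j})$, the quantity $\nu_\ell\!\left(\lambda_{p_j}-\chi(p_j)\varepsilon(p_j^{k-1}+p_j^{k-2})\right)$ vanishes for both signs $\varepsilon$; applying Proposition \ref{OneDiscriminant} to $\phi^{(j-1)}$ with $\varepsilon=-\varepsilon_j$ then produces a discriminant with $\left(\frac{D}{p_j}\right)=\varepsilon_j$ and $\nu_\ell(c)=0$. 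This witness lies in the support of
\[
\phi^{(j)}=\tfrac12\Big(\phi^{(j-1)}|(1-B_{p_j})+\varepsilon_j\,(\phi^{(j-1)})_{\psi_j}\Big),\qquad \psi_j=\Big(\tfrac{\cdot}{p_j}\Big),
\]
the projection of $\phi^{(j-1)}$ onto $\left(\frac{D}{p_j}\right)=\varepsilon_j$, so $\nu_\ell(\phi^{(j)})=0$. The operators defining $\phi^{(j)}$ act at $p_j$, hence commute with $T_{p_{j+1}}$, so $\phi^{(j)}$ stays a $T_{p_{j+1}}$-eigenform and the induction proceeds; write $\tilde\phi=\phi^{(s-1)}$.

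Finally I would generate infinitely many discriminants using the untouched prime $p_s$, at which $\tilde\phi$ is still a $T_{p_s}$-eigenform. Proposition \ref{OneDiscriminant} (with $\varepsilon=\varepsilon_s$; its error term is $0$ since $\ell\notin A(p_s,\lambda_{p_s})$) gives a base discriminant $D^*$ with $\left(\frac{D^*}{p_s}\right)=-\varepsilon_s$ and $\nu_\ell(c)=0$, so Proposition \ref{InfinitelyDiscriminants} applies with $\varepsilon=-\varepsilon_s$ and yields infinitely many discriminants $D=r^2-4mn<0$ of pairwise distinct square-free parts with $\left(\frac{D}{p_s}\right)=\varepsilon_s$ and $\nu_\ell(c(n,r))\le \nu_\ell\!\left(\lambda_{p_s}-\chi(p_s)(-\varepsilon_s)(p_s^{k-1}+p_s^{k-2})\right)=0$, hence $\nu_\ell(c(n,r))=0$. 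Being in the support of $\tilde\phi$, these $D$ also satisfy $(D,mN)=1$ and $\left(\frac{D}{p_i}\right)=\varepsilon_i$ for $i<s$, and their coefficients coincide with those of the original $\phi$. For each such $D$ I write $D=f^2D_0$ with $D_0$ fundamental; then $(f,mN)=1$, Lemma \ref{FundamentalDiscriminants} gives $D_0=r_0^2-4mn_0$ with $c(n,r)=c(f^2n_0,fr_0)$, and Lemma \ref{HeckeEigenformRelation} gives $\nu_\ell(c(n_0,r_0))\le \nu_\ell(c(f^2n_0,fr_0))=0$, whence $\nu_\ell(c(n_0,r_0))=0$ since the coefficients are integral. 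As $\left(\frac{f^2}{p_i}\right)=1$ the prescribed signs transfer to $D_0$, and the distinct square-free parts give infinitely many distinct fundamental $D_0$.

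The main obstacle is the orchestration in the middle step: projecting blindly onto all the sign classes could annihilate every coefficient of valuation $0$. What rescues the argument is to prescribe the signs one prime at a time and, before each projection, to invoke Proposition \ref{OneDiscriminant} — whose error term $\nu_\ell(\lambda_{p_j}-\chi(p_j)\varepsilon(p_j^{k-1}+p_j^{k-2}))$ vanishes precisely under the hypothesis $\ell\notin A(p_j,\lambda_{p_j})$ — to manufacture a valuation-$0$ coefficient already lying in the class about to be kept. (If $S=\emptyset$ one first adjoins an auxiliary odd prime $p\nmid 2mN$ with $\ell\notin A(p,\lambda_p)$ and runs the last step at it.) A secondary bookkeeping issue is that the projections $1-B_p$ for $p\mid m$ change the level to $\Gamma_0((Np)^2)\ltimes(Np\Z\times\Z)$-type groups, which is harmless but must be tracked so that Propositions \ref{OneDiscriminant} and \ref{InfinitelyDiscriminants} remain applicable.
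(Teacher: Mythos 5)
Your proposal is correct and follows essentially the same route as the paper: project onto $(D,mN)=1$ with the $B_{p}$, impose the signs at $p_{1},\dots,p_{s-1}$ one at a time while using Proposition \ref{OneDiscriminant} (and $\ell\notin A(p_{j},\lambda_{p_{j}})$) to keep a valuation-zero coefficient in the retained class, run Proposition \ref{InfinitelyDiscriminants} at the last prime $p_{s}$, and descend to fundamental discriminants via Lemmas \ref{FundamentalDiscriminants} and \ref{HeckeEigenformRelation}. You even make explicit a step the paper leaves implicit (producing the base discriminant $D^{*}$ with $\bigl(\tfrac{D^{*}}{p_{s}}\bigr)=-\varepsilon_{s}$ before invoking Proposition \ref{InfinitelyDiscriminants}); only your parenthetical for $S=\emptyset$ is shaky, since then no prime with $\ell\notin A(p,\lambda_{p})$ is guaranteed by the hypotheses, but that degenerate case is outside the theorem's intended scope.
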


		\begin{proof}
		By Lemma \ref{TwistingProjectionOperators} the projected function
		\[
		\phi' = \phi\bigg|\prod_{p \mid mN}\left(1 - B_{p}\right) = \sum_{\substack{n,r \in \Z \\ r^{2}< 4mn \\ (r,mN) = 1}}c(n,r)q^{n}\zeta^{r}
		\]
		is a Jacobi form for $\Gamma_{0}(m^{2}N^{2})\ltimes (mN\Z \times \Z)$ with integral algebraic Fourier coefficients, and we have $\nu_{\ell}(\phi') = 0$ by assumption. Further, $\phi'$ is still an eigenfunction of all $T_{p}$ with $(p,mN) = 1$.  
		
		A repeated application of Theorem \ref{OneDiscriminant} shows that the function
		\[
	\phi'' = \phi'\bigg|\prod_{j=1}^{s-1}(1-B_{p_{j}}+\varepsilon_{j} (\cdot)_{\psi_{j}}) =\sum_{\substack{n,r \in \Z \\ (r,mN) = 1 \\ (\frac{D}{p_{j}}) = \varepsilon_{j}, 1 \leq j \leq s-1}}c(n,r)q^{n}\zeta^{r}
		\]
		(with $\psi_{j} = \big(\frac{\cdot}{p_{j}}\big)$) again satisfies $\nu_{\ell}(\phi'') = 0$ and is an eigenform of $T_{p}$ for $p$ prime to $mN$ and $p_{1},\dots,p_{s-1}$. 
		
		If we now apply Theorem \ref{InfinitelyDiscriminants} to $\phi''$ with $p = p_{s}$ and $\varepsilon = -\varepsilon_{s}$, we find infinitely many discriminants $D = r^{2}-4mn < 0$ prime to $mN$ with distinct square free parts such that $\big( \frac{D}{p_{j}}\big) = \varepsilon_{j}$ for all $j = 1,\dots,s$ and $\nu_{\ell}(c(n,r)) = 0$. Writing each of these discriminants as $D = f^{2}D_{0}$ with necessarily pairwise distinct fundamental discriminants $D_{0}$, Lemma \ref{FundamentalDiscriminants} shows that $D_{0} = r_{0}^{2}-4mn_{0}$ is a square mod $4m$ with $c(n,r) = c(f^{2}n_{0},fr_{0})$, and the eigenform relations from Lemma \ref{HeckeEigenformRelation} give $\nu_{\ell}(c(n_{0},r_{0})) \leq \nu_{\ell}(c(n,r)) = 0$, i.e. $\nu_{\ell}(c(n_{0},r_{0})) = 0$. This completes the proof.
		\end{proof}

		Now we restrict to $N = 1$. In order to infer Theorem \ref{MainTheorem} from the last theorem, it only remains to show that for a Hecke eigenform $\phi \in J_{k,m}$ (with $k > 2$ if $\phi$ is not a cusp form) and each prime $p$ with $(p,2m) = 1$ the set $A(p,\lambda_{p})$ is finite, or equivalently, $\lambda_{p} \neq \pm (p^{k-1}+p^{k-2})$.

		First suppose that $\phi \in J_{k,m}^{\cusp}$ is a cusp form. By \cite{SkoruppaZagier}, Theorem 5, $J^{\cusp}_{k,m}$ is Hecke-equivariantly isomorphic to a certain subspace of $S_{2k-2}(m)$, so $\lambda_{p}$ is also an eigenvalue of the Hecke operator $T_{p}$ on $S_{2k-2}(m)$. Now \cite{KohnenRemark} gives the estimate $|\lambda_{p}| < p^{k-1} + p^{k-2}$.

	If $0 \neq \phi \in J_{k,m}$ is a non-cuspidal Jacobi form of weight $k > 2$ and an eigenform of all Hecke operators $T_{p}$ for $(p,m) = 1$, then by Theorem 4.4 in \cite{EichlerZagier} the form $\phi$ lies in the space spanned by the $(U_{d}\circ V_{d'})$-images of Eisenstein series $E_{k,m'}^{(\chi)}$ of index $m' = f^{2}$ defined after Theorem 2.4 in \cite{EichlerZagier}, where $\chi$ runs through the primitive Dirichlet characters mod $f$ with $\chi(-1) = (-1)^{k}$. It was shown at the end of the proof of Theorem 2 in \cite{SkoruppaZagier} that $E_{k,m'}^{(\chi)}$ is an eigenform of $T_{p}$ for $(p,m') = 1$ with eigenvalue $\sigma_{2k-3}^{(\chi)}(p) = \sum_{d\mid p}d^{2k-3}\overline{\chi}(d)\chi(p/d)$. Hence the eigenvalues of $\phi$ are also of this form, and for $k > 2$ we have $\sigma_{2k-3}^{(\chi)}(p) \neq \pm(p^{k-1}+p^{k-2})$. 
	
	Thus the finite set of primes $\ell$ with $(\ell,2m) = 1$ for which Theorem \ref{MainTheorem} does not hold is given by the primes in 
	\begin{align}\label{ExceptionalPrimes2}
	\bigcap_{\substack{p \text{ prime} \\ (p,2m) = 1}}A(p,\lambda_{p}) \ \cup \ \{\ell \text{ prime}:\nu_{\ell}(c(n,r)) > 0 \text{ for all $r^{2}\leq 4mn$ with $(r,m) = 1$}\}
	\end{align}

	
	Now let $\phi \in J_{k,m}^{\new}$ be a newform in the sense of \cite{EichlerZagier} which is an eigenform of all Hecke operators $T_{p}$ with $(p,m)=1$. Then $\phi$ is also an eigenform of the Atkin-Lehner involutions $w_{m'}, m' \mid \mid m,$ defined in \cite{EichlerZagier}, Theorem 5.2. Let $\epsilon_{m'} \in \{\pm 1\}$ denote the eigenvalue of $\phi$ under $w_{m'}$. The following proposition shows that if $\phi$ is a newform, we can replace in Theorem \ref{MainTheoremTechnical} the requirement that there be a discriminant $D = r^{2}-4mn < 0$ prime to $m$ with $\nu_{\ell}(c(n,r)) = 0$ by the conditions $\nu_{\ell}(\phi) = 0$ and $\ell \nmid (p-\epsilon_{p})$ for all primes $p \mid \mid m$.
	
	\begin{Proposition}
		Let $\phi \in J_{k,m}^{\new}$ be a Jacobi newform with integral algebraic coefficients which is an eigenfunction of all Hecke operators $T_{p}$ for $(p,m) = 1$. Let $\ell$ be a prime with $(\ell,m) = 1$ and $\nu_{\ell}(\phi) = 0$ such that $\ell$ does not divide $p-\epsilon_{p}$ for any prime $p \mid \mid m$, where $\epsilon_{p} \in \{\pm 1\}$ denotes the eigenvalue of $\phi$ under the Atkin-Lehner involution $w_{p}$. Then there exists a discriminant $D = r^{2}-4mn < 0$ prime to $m$ such that $\nu_{\ell}(c(n,r)) = 0$.
	\end{Proposition}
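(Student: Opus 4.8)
The plan is to reduce the statement to the single assertion that the projection of $\phi$ onto its Fourier coefficients with $(D,m)=1$ is nonzero modulo $\ell$. Indeed, with the projection operators of Lemma \ref{TwistingProjectionOperators} set
\[
\phi^{\flat}=\phi\Big|\prod_{p\mid m}(1-B_p)=\sum_{\substack{n,r\in\Z\\ r^2<4mn\\ (r^2-4mn,m)=1}}c(n,r)q^n\zeta^r,
\]
where the condition $r^2<4mn$ (hence $D<0$) is automatic since $\phi$, being a newform, is cuspidal and coprimality excludes $D=0$. A coefficient of $\phi^{\flat}$ with $\nu_\ell=0$ is then exactly the discriminant we seek, so it suffices to prove $\nu_\ell(\phi^{\flat})=0$. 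As $\nu_\ell(\phi)=0$ by hypothesis, I would reduce this to proving, for each prime $p\mid m$, the implication $\nu_\ell\big(\phi|(1-B_p)\big)>0\Rightarrow\nu_\ell(\phi)>0$, and then clear the primes dividing $m$ one at a time.

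The first, routine, point is that clearing primes one at a time is legitimate. The index Atkin--Lehner involution $w_q$ acts on the theta decomposition of $\phi$ by the permutation of $\Z/2m\Z$ which negates the $q$-component of $\gamma$ and fixes the others; it therefore preserves every discriminant $D$ and commutes with $B_p$ for $p\neq q$. Hence if $\phi$ is a $w_q$-eigenform with eigenvalue $\epsilon_q$, so is $\phi|(1-B_p)$, and the hypotheses $\ell\nmid q-\epsilon_q$ persist. Thus after treating one prime $p\mid m$ the same argument applies to $\phi|(1-B_p)$ for the next prime.

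The heart of the matter is the per-prime relation for $p\mid\mid m$ (odd). Here the difficulty is that $w_p$ negates only the $p$-component of $\gamma$, so it \emph{fixes} the bad components with $p\mid D$ (equivalently $p\mid r$) and genuinely permutes the good ones with $p\nmid D$; the eigenvalue equation $\phi|w_p=\epsilon_p\phi$ alone does not link the two. The link must instead come from the newform property: $\phi$ lies in the $p$-new subspace of $J_{k,m}$, i.e.\ it is orthogonal to the oldforms coming from index $m/p$. Combining this characterisation of the $p$-new part with the explicit action of $w_p$ on Fourier coefficients (\cite{EichlerZagier}, Theorem 5.2) should yield an operator identity of the shape
\[
(p-\epsilon_p)\,\phi|B_p=\Theta_p\big(\phi|(1-B_p)\big),
\]
where $\Theta_p$ is an explicit $\ell$-integral operator on Fourier expansions built from $w_p$ and the degeneracy maps relating indices $m$ and $m/p$. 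Granting this, the conclusion is immediate: the assumption $\ell\nmid p-\epsilon_p$ makes $p-\epsilon_p$ an $\ell$-adic unit, so $\nu_\ell(\phi|B_p)\geq\nu_\ell\big(\phi|(1-B_p)\big)$; therefore $\phi|(1-B_p)\equiv 0\ (\mathrm{mod}\ \ell)$ forces $\phi|B_p\equiv 0$, i.e.\ $\nu_\ell(\phi)>0$, which is the contrapositive of the desired implication.

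I expect the derivation of the identity $(p-\epsilon_p)\,\phi|B_p=\Theta_p(\phi|(1-B_p))$ to be the main obstacle, and in particular pinning down the exact factor $p-\epsilon_p$: this needs the index Atkin--Lehner involution written out on the theta components, the identification of the $p$-new subspace inside $J_{k,m}$ via the maps from index $m/p$, and a verification that the resulting intertwiner is $\ell$-integral under $(\ell,2mp(p-1))=1$. Two further technical points must be handled separately: the prime $p=2$, where $B_p$ is no longer governed by the congruence $p\mid r$ and the $2$-adic analysis of the theta components is more delicate, and primes with $p^2\mid m$, where $w_p$ must be replaced by the Atkin--Lehner involution attached to the exact prime power $p^a\mid\mid m$ and the normalising factor changes accordingly.
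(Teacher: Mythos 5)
Your overall strategy is the right one and is essentially the paper's: reduce everything to showing that the projection of $\phi$ onto the coefficients with $(D,m)=1$ (equivalently $(r,m)=1$, since $D\equiv r^{2}\bmod m$) cannot vanish modulo $\ell$, and extract the factor $p-\epsilon_{p}$ from the newform property together with the Atkin--Lehner eigenvalue equation. But the step that carries the entire content of the proposition --- the identity $(p-\epsilon_{p})\,\phi|B_{p}=\Theta_{p}\bigl(\phi|(1-B_{p})\bigr)$ with an \emph{$\ell$-integral} operator $\Theta_{p}$ --- is only conjectured, and you flag it yourself as the main obstacle. As written, this is a genuine gap rather than a technicality: everything else in your argument (the contrapositive, clearing primes one at a time) is bookkeeping around this unproved identity.

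Two concrete points about what is missing. First, the paper obtains the identity by running the computation of Lemma 3.1 of \cite{SkoruppaZagier}: the projection onto $(r,m)=1$ is realised by the M\"obius average $f=\sum_{t\mid m}\mu(t)\prod_{p\mid t}\phi|_{m}[0,g/p]$, and converting $\nu_{\ell}(f)>0$ into $\nu_{\ell}\bigl(\phi|(\prod_{d^{2}\mid m}\tfrac{\mu(d)}{d}u_{d}U_{d})\circ\prod_{p\mid\mid m}(1-\tfrac{1}{p}w_{p})\bigr)>0$ forces one to slash $f$ by matrices outside $\Gamma_{0}(m^{2})$, i.e.\ to control Fourier expansions at the other cusps. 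The $\ell$-integrality of those expansions is exactly Theorem \ref{qExpansionPrinciple}, which your sketch never invokes; without it there is no reason your intertwiner $\Theta_{p}$ is $\ell$-integral, and this is precisely the point where the argument could fail. Second, the role of newness is not orthogonality to oldforms from index $m/p$: for $p\mid\mid m$ the relevant mechanism is that the auxiliary operators $u_{d}$ ($d^{2}\mid m$, $d>1$) annihilate $\phi$, which removes the $U_{d}$-terms from the Skoruppa--Zagier identity and leaves $\prod_{p\mid\mid m}(1-\epsilon_{p}/p)\phi$; the factor $p-\epsilon_{p}$ then comes from the eigenvalue equation $\phi|w_{p}=\epsilon_{p}\phi$, not from a decomposition into $p$-new and $p$-old parts. (Note also that the naive use of $w_{p}$ on theta components gives information only on the components fixed by $w_{p}$, which is why the full cusp computation is unavoidable.) To complete your proof you would need to carry out this computation and the accompanying $\ell$-integrality check.
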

	
	\begin{proof}
		The proof goes along the same lines as the proof of Lemma 3.1 in \cite{SkoruppaZagier}: Assume that $\nu_{\ell}(c(n,r)) > 0$ for all $r^{2} <4mn$ with $(r,m) = 1$. Using the formula $\sum_{t \mid m}\mu(m)\prod_{p\mid t}e(r/p) = 0$ for $(r,m) > 1$ the assumption gives
		\[
		\nu_{\ell}\left( \sum_{t \mid m}\mu(t)\prod_{p \mid t}\phi\bigg|_{m}\left[0,\frac{g}{p}\right]\right) > 0
		\]
		for every integer $g$. Let $f(\tau,z)$ denote the function in the braces. It is easily checked that $f$ is a Jacobi form for the group $\Gamma_{1}(m^{2}) \ltimes (m\Z \times \Z)$ with integral algebraic Fourier coefficients. Theorem \ref{qExpansionPrinciple} shows that for $M \in \SL_{2}(\Z)$ the function $f|_{k,m}M$ has algebraic Fourier coefficients and $\nu_{\ell}(f|_{k,m}M) \geq \nu_{\ell}(f) > 0$. Now the calculation in the proof of Lemma 3.1 in \cite{SkoruppaZagier} gives
		\[
		\nu_{\ell}\left(\phi\bigg|\left(\prod_{d^{2}\mid m}\frac{\mu(d)}{d}u_{d}U_{d}\right)\circ\left( \prod_{\substack{p \mid \mid m }}\left(1 - \frac{1}{p}w_{p}\right)\right) \right) > 0
		\]
		where $u_{d}$ is the auxiliary operator from \cite{SkoruppaZagier}, line (6), and $\phi|U_{d} = \phi(\tau,dz)$. Since $\phi$ is a newform and an eigenform of all Hecke operators, we have $\phi|u_{d} = 0$ for all $d^{2}|m$ with $d > 1$. In particular, we find
		\[
		\nu_{\ell}\left(\phi\bigg|\prod_{\substack{p \mid \mid m  }}\left(1 - \frac{1}{p}w_{p}\right)\right) > 0
		\]
		The inner function is just $\prod_{p \mid \mid m}(1-\epsilon_{p}/p)\phi$, so we get $\nu_{\ell}(\phi) > 0$, contradicting the assumption $\nu_{\ell}(\phi) = 0$. Hence there is some $D =r^{2}-4mn < 0$ prime to $m$ with $\nu_{\ell}(c(n,r)) = 0$.
	\end{proof}

%
%
	\bibliography{references}{}
\bibliographystyle{alphadin}

\end{document}